\theoremstyle{plain}
\newtheorem{thm}{Theorem}[section]
\newtheorem{lem}[thm]{Lemma}
\newtheorem{cor}[thm]{Corollary}
\theoremstyle{definition}
\newtheorem{defn}[thm]{Definition}
\theoremstyle{remark}
\newtheorem{rem}[thm]{Remark}
\crefname{thm}{theorem}{theorems}
\crefname{lem}{lemma}{lemmas}
\crefname{cor}{corollary}{corollaries}
\crefname{prop}{proposition}{propositions}
\crefname{mainthm}{theorem}{theorems}
\crefname{maincor}{corollary}{corollaries}
\crefname{defn}{definition}{definitions}
\crefname{conj}{conjecture}{conjectures}
\crefname{example}{example}{examples}
\crefname{exercise}{exercise}{exercises}
\crefname{prob}{problem}{problems}
\crefname{quest}{question}{questions}
\crefname{rem}{remark}{remarks}
\crefname{claim}{claim}{claims}
\crefname{axiom}{axiom}{axioms}
\crefname{hyp}{hypothesis}{hypotheses}
\crefname{notation}{notation}{notations}
\crefname{case}{case}{cases}
\numberwithin{equation}{section}
\definecolor{darkgreen}{cmyk}{1,0,1,.2}
\definecolor{m}{rgb}{1,0.1,1}
\newdimen\theight
\def\TeXref#1{%
             \leavevmode\vadjust{\setbox0=\hbox{{\tt
                     \quad\quad  {\small \textrm #1}}}%
             \theight=\ht0
             \advance\theight by \lineskip
             \kern -\theight \vbox to
             \theight{\rightline{\rlap{\box0}}%
             \vss}%
             }}%
\begin{document}
\vskip .5cm
\title[Liouville Type Theorem for $(\mathcal F,\mathcal F')_{p}$-Harmonic Maps on Foliations]
{Liouville Type Theorem for $(\mathcal F,\mathcal F')_{p}$-Harmonic Maps \\ on Foliations}

\author[X. S. Fu]{Xueshan Fu}
\address{Department of Mathematics\\
         Jeju National University\\
         Jeju 63243\\
         Republic of Korea}
\email{xsfu@jejunu.ac.kr}

\author[S. D. Jung]{Seoung Dal Jung}
\address{Department of Mathematics\\
         Jeju National University\\
         Jeju 63243\\
         Republic of Korea}
\email{sdjung@jejunu.ac.kr}

\thanks{The second author was supported by the National Research Foundation of Korea(NRF) grant funded by the Korea government (MSIP) (NRF-2022R1A2C1003278).}

\subjclass[2010]{53C12; 57R30; 58E20.}
\keywords{Transversal $p$-tension field, $(\mathcal F,\mathcal F')_{p}$-harmonic map, Liouville type theorem.}

\begin{abstract}
In this paper, we study $(\mathcal F,\mathcal F')_{p}$-harmonic maps between foliated Riemannian manifolds $(M,g,\mathcal F)$ and $(M',g',\mathcal F')$. A $(\mathcal F,\mathcal F')_{p}$-harmonic map $\phi:(M,g,\mathcal F)\to (M', g',\mathcal F')$ is a critical point of the transversal $p$-energy functional $E_{B,p}$.  Trivially, $(\mathcal F,\mathcal F')_2$-harmonic map is $(\mathcal F,\mathcal F')$-harmonic map, which is a critical point of $E_B$.  There is another definition of a harmonic map on foliated Riemannian manifolds, called transversally harmonic map, which is a solution of the Euler-Largrange equation $\tau_b(\phi)=0$.  Two definitions are not equivalent, but if  $\mathcal F$ is minimal, then two definitons are equivalent.  Firstly, we give the first and second variational formulas for $(\mathcal F,\mathcal F')_{p}$-harmonic maps. Next, we  investigate the generalized Weitzenb\"ock type formula and the Liouville type theorem for $(\mathcal F,\mathcal F')_{p}$-harmonic map. 
\end{abstract}
\maketitle

\section{Introduction}
Let $(M,g)$ and $(M',g')$ be Riemannian manifolds and $\phi:(M,g)\to (M', g')$ be a smooth map. Then $\phi$ is said to be {\it harmonic} if the tension field $\tau(\phi)={\rm tr}_{g}(\nabla d\phi)$ vanishes, equivalently, $\phi$ is a critical point of the energy functional defined by
\begin{align*}
E(\phi)={1\over 2}\int_{M} | d\phi|^2\mu_{M},
\end{align*}
where $\mu_{M}$ is the volume element of $M$ \cite{ES}. In 2003, J. Konderak and R. Wolak introduced the notion of transversally harmonic maps between foliated Riemannian manifolds \cite{KW1}. Let $(M,g,\mathcal F)$ and $(M',g',\mathcal F')$ be foliated Riemannian manifolds and $\phi:(M,g,\mathcal F)\to (M', g',\mathcal F')$ be a smooth foliated map, (i.e., $\phi$ is a smooth leaf-preserving map). Let $Q$ be the normal bundle of $\mathcal F$ and $d_{T}\phi=d\phi|_{Q}$.    Then $\phi$ is said to be {\it transversally harmonic} if  $\phi$ is a solution of the Eular-Largrange equation $\tau_{b}(\phi)=0$, where  $\tau_b(\phi)={\rm tr}_{Q}(\nabla_{\rm tr} d_T\phi)$ is the transversal tension field of $\mathcal F$. Transversally harmonic maps on foliated Riemannian manifolds have been studied by many authors \cite{CZ,KW1,KW2,OSU}. However, a transversally harmonic map is not a critical point of the transversal energy functional \cite{JJ2} defined by
\begin{align*}
E_{B}(\phi)=\frac{1}{2}\int_{M} | d_T \phi|^2\mu_{M}.
\end{align*}
In 2013, S. Dragomir and A. Tommasoli \cite{DT} defined a new harmonic map, called {\it $(\mathcal F,\mathcal F')$-harmonic map}, which is a critical point of the transversal energy functional $E_{B}$. Two definitions are equivalent when $\mathcal F$ is minimal. Analogously to $p$-harmonic map on ordinary manifolds,  we define $(\mathcal F,\mathcal F')_{p}$-harmonic map, $p>1$, which is a generalization of $(\mathcal F,\mathcal F')$-harmonic map.
In fact, a smooth foliated map $\phi$ is said to be {\it $(\mathcal F,\mathcal F')_{p}$-harmonic} if $\phi$ is a critical point of the {\it transversal $p$-energy functional} defined by
\begin{align*}
E_{B,p}(\phi)={1\over p}\int_{M} | d_T \phi|^p\mu_{M}.
\end{align*}
Trivially, $(\mathcal F,\mathcal F')_{2}$-harmonic map is just $(\mathcal F,\mathcal F')$-harmonic map \cite{DT} and $(\mathcal F,\mathcal F')_{p}$-harmonic maps  are $p$-harmonic maps for point foliations. 
Similarly, we can define the {\it transversally $p$-harmonic map}  as a solution of the Euler-Largrange equation $\tau_{b,p}(\phi)=0$, where $\tau_{b,p}(\phi)= {\rm tr}_{Q}(\nabla_{\rm tr} |d_T\phi|^{p-2}d_T\phi)$ is the  transversal $p$-tension field of $\mathcal F$ (cf. \ref{eq3-3}). 
Two definitions are not equivalent if $\mathcal F$ is not minimal (cf. Theorem \ref{th3}).  But if $\mathcal F$ is minimal,  then two definitions are equivalent, and so they are generalizations of $p$-harmonic maps on an ordinary manifold.    In this paper, we study the Liouville type  theorem for $(\mathcal F,\mathcal F')_p$-harmonic maps not for transversally $p$-harmonic maps.  But  the Liouville type theorem for transversally $2$-harmonic maps have been studied  in \cite{FJ,JJ}.  The Liouville type theorem on Riemannian manifolds has been studied by many researchers \cite{Jung1,mlj,NN,EN,sy,t,y}. 
   In particular, D.J. Moon  et al. \cite{mlj}  proved  the Liouville type theorem for p-harmonic maps on  a complete Riemannian manifold as follows.  

\begin{thm}  \cite{mlj} Let $(M,g)$ be a complete Riemannian manifold and let $(M',g')$ be a Riemannian manifold of non-positive sectional curvature. Assume that the Ricci curvature ${\rm Ric^M}$ of $M$ satisfies ${\rm Ric}^M \geq - {4(p-1)\over p^2 }\mu_0$ and $> - {4(p-1)\over p^2}\mu_0$ at some point, where $\mu_0$ is the infimum of the eigenvalues of the Laplacian acting on $L^2$-functions on $M$. Then any $p$-harmonic map $\phi:M\to M'$  of finite $p$-energy is constant.
\end{thm}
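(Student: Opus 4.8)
The plan is to run the Bochner technique for $p$-harmonic maps and to feed the resulting global integral estimate into the variational description of $\mu_0$, namely $\mu_0\int_M f^2\,\mu_M\le\int_M|\nabla f|^2\,\mu_M$ for every admissible $L^2$-function $f$. Write $\lambda=|d\phi|$ and work on the open set $\{\lambda>0\}$, regularizing by $\lambda_\epsilon=\sqrt{\lambda^2+\epsilon^2}$ near the zero set of $d\phi$. Starting from the Bochner formula
\begin{align*}
\tfrac12\Delta|d\phi|^2={}&|\nabla d\phi|^2+\langle\nabla\tau(\phi),d\phi\rangle+\sum_i\langle d\phi(\operatorname{Ric}^M e_i),d\phi(e_i)\rangle\\
&-\sum_{i,j}\langle R^{M'}(d\phi\,e_i,d\phi\,e_j)d\phi\,e_j,d\phi\,e_i\rangle,
\end{align*}
I would discard the last sum, which is nonnegative after the minus sign because $(M',g')$ has nonpositive sectional curvature, leaving a pointwise inequality in which the only sign-indefinite term is the domain Ricci term.

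The role of the $p$-harmonic equation enters at the integration step. Multiplying the inequality by the weight $\lambda^{p-2}$ and integrating against a cutoff, the term $\int_M\lambda^{p-2}\langle\nabla\tau(\phi),d\phi\rangle\,\mu_M$ is integrated by parts into $-\int_M\langle\tau(\phi),\operatorname{div}(\lambda^{p-2}d\phi)\rangle\,\mu_M$, which vanishes identically since $\phi$ is $p$-harmonic, i.e. $\operatorname{div}(\lambda^{p-2}d\phi)=0$; this is exactly where $p$-harmonicity is used. On the left, $\int_M\lambda^{p-2}\,\tfrac12\Delta\lambda^2\,\mu_M$ integrates by parts to $-(p-2)\int_M\lambda^{p-2}|\nabla\lambda|^2\,\mu_M$. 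Inserting the Kato inequality $|\nabla d\phi|^2\ge|\nabla\lambda|^2$ and the curvature bound $\operatorname{Ric}^M\ge-\tfrac{4(p-1)}{p^2}\mu_0$ then collapses everything to
\begin{align*}
(p-1)\int_M\lambda^{p-2}|\nabla\lambda|^2\,\mu_M\le\tfrac{4(p-1)}{p^2}\mu_0\int_M\lambda^p\,\mu_M.
\end{align*}

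At this point I would test the spectral inequality with $\psi=\lambda^{p/2}$. The two identities $\int_M\psi^2\,\mu_M=\int_M|d\phi|^p\,\mu_M<\infty$ and $|\nabla\psi|^2=\tfrac{p^2}{4}\lambda^{p-2}|\nabla\lambda|^2$ show that $\psi$ is admissible (this is the only place the finite $p$-energy hypothesis is used) and turn the spectral bound into $\tfrac{p^2}{4}\int_M\lambda^{p-2}|\nabla\lambda|^2\,\mu_M\ge\mu_0\int_M\lambda^p\,\mu_M$, which is the exact reverse of the estimate above after dividing by $(p-1)$. Hence every inequality in the chain is an equality; in particular $\int_M\lambda^{p-2}\sum_i\bigl(r_i+\tfrac{4(p-1)}{p^2}\mu_0\bigr)|d\phi(e_i)|^2\,\mu_M=0$, where the $r_i$ are the Ricci eigenvalues. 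Since $\operatorname{Ric}^M>-\tfrac{4(p-1)}{p^2}\mu_0$ at one point, hence on a neighborhood $U$ by continuity, the integrand is strictly positive there wherever $\lambda>0$, forcing $\lambda\equiv0$ on $U$. Equality in the spectral bound forces $\psi$ to realize the bottom of the $L^2$-spectrum, so standard elliptic theory makes $\psi$ a smooth solution of $\Delta\psi+\mu_0\psi=0$; vanishing on the open set $U$, it vanishes identically by unique continuation. Thus $d\phi\equiv0$ and $\phi$ is constant.

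I expect the main obstacle to be analytic rather than algebraic: justifying the two integrations by parts on the complete, possibly noncompact $M$. Here one chooses cutoffs $\eta$ with $\eta\equiv1$ on $B_R$, supported in $B_{2R}$, and $|\nabla\eta|\le C/R$, and checks that the error terms, controlled by $\int_M|\nabla\eta|^2\lambda^p\,\mu_M\le\tfrac{C^2}{R^2}\int_M\lambda^p\,\mu_M\to0$, vanish as $R\to\infty$ thanks to finite $p$-energy; this also requires the a priori finiteness of $\int_M\lambda^{p-2}|\nabla\lambda|^2\,\mu_M$. The other delicate point, for $1<p<2$, is the singular weight $\lambda^{p-2}$ on the zero set of $d\phi$; I would carry out all estimates for $\lambda_\epsilon$, keeping the weight bounded, and send $\epsilon\to0$ only after the integrations.
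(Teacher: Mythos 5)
Your proposal is correct, and it is essentially the argument the paper gives for its foliated generalization (Theorem \ref{th5}), of which the quoted statement is the point-foliation case (where all the mean-curvature terms in the paper's proof vanish): an integral Bochner--Weitzenb\"ock estimate, Kato's inequality, cutoffs justified by completeness and finite $p$-energy, the variational characterization of $\mu_0$ tested on $|d\phi|^{p/2}$ (the paper's (\ref{ee22})), and an equality analysis at the point where the Ricci bound is strict. The only organizational difference is where the weight $|d\phi|^{p-2}$ is inserted: you weight the classical Bochner identity for $|d\phi|^2$ and use $p$-harmonicity in the form $\operatorname{div}(|d\phi|^{p-2}d\phi)=0$ to kill the tension term, whereas the paper applies its Weitzenb\"ock formula (Theorem \ref{th2}) to the one-form $\Psi=|d_T\phi|^{p-2}d_T\phi$, encodes $p$-harmonicity as $\delta_\nabla\Psi=0$ (Lemma \ref{lem1}), applies Kato to $\nabla\Psi$ (inequality (\ref{ee74})), and must then estimate away the residual term $\langle\delta_\nabla d_\nabla\Psi,d_T\phi\rangle$ with the cutoffs ((\ref{ee7}), (\ref{ee32})); both computations yield the same inequality with the same constant $\tfrac{4(p-1)}{p^2}\mu_0$. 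One point where you are in fact more careful than the paper: the vanishing integral (the paper's (\ref{ee18})) by itself only forces $d\phi=0$ on a neighborhood of $x_0$ where the Ricci inequality is strict, and your extra step --- equality in the Rayleigh quotient makes $|d\phi|^{p/2}$ a minimizer, hence a weak and then smooth eigenfunction, which vanishes identically by unique continuation once it vanishes on an open set --- is exactly what is needed to globalize; the paper asserts the conclusion directly from (\ref{ee18}) without spelling this out.
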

We generalize Theorem 1.1 to $(\mathcal F,\mathcal F')_p$-harmonic maps on foliated manifolds.
This paper is organized as follows. In Section 2, we review some basic facts on foliated Riemannian manifolds. In Section 3, we prove the first variational formula for the transversal $p$-energy functional (Theorem \ref{th3}). From the first variational formula, we know that the transversal $p$-harmonic map is not a critical point of the transversal $p$-energy functional. And  the second variational formulas for  $(\mathcal F,\mathcal F')_p$-harmonic map is given (Theorem \ref{th4}) and the stability of $(\mathcal F,\mathcal F')_p$-harmonic map is studied. 
In Section 4, we prove the generalized Weitzenb\"ock type formula  for $(\mathcal F,\mathcal F')_p$-harmonic map (Corollary \ref{co3}).  As an  application of the Weitzenb\"ock type formula,  we prove the Liouville type theorem for $(\mathcal F,\mathcal F')_{p}$-harmonic maps (Theorem \ref{th5}).  Namely, let $\lambda_0$ be the infimum of the eigenvalues of the basic Laplacian acting on $L^2$-basic functions on $(M,\mathcal F)$. 

\begin{thm} (cf. Theorem 4.5)
Let $(M,g,\mathcal F)$ be a complete foliated Riemannian manifold with coclosed mean curvature form and all leaves be compact.
Let $(M',g',\mathcal F')$ be a foliated Riemannian manifold with non-positive transversal sectional curvature. Assume that the transversal Ricci curvature ${\rm Ric^{Q}}$ of $M$ satisfies ${\rm Ric^{Q}}\geq-\frac{4(p-1)}{p^{2}}\lambda_{0}$  and ${\rm Ric^{Q}}>-\frac{4(p-1)}{p^{2}}\lambda_{0}$ at some point $x_{0}$.
Then any $(\mathcal F,\mathcal F')_{p}$-harmonic map $\phi : (M,g,\mathcal F) \rightarrow (M', g',\mathcal F')$ of $E_{B,p}(\phi)<\infty$ is transversally constant.
\end{thm}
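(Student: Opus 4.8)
The plan is to run a Bochner--Weitzenb\"ock argument against the spectral gap $\lambda_{0}$, with the constant $\frac{4(p-1)}{p^{2}}$ appearing through a refined Kato inequality and cancelling on both sides so that the whole estimate collapses to the Poincar\'e inequality for the basic Laplacian. First I would start from the generalized Weitzenb\"ock formula (Corollary \ref{co3}) applied to the $(\mathcal F,\mathcal F')_{p}$-harmonic map $\phi$. Since $(M',g',\mathcal F')$ has non-positive transversal sectional curvature, the transversal curvature term of $M'$ has a favourable sign and may be discarded, leaving a Bochner inequality relating $\Delta_{B}\tfrac12|d_{T}\phi|^{2}$ to $|\nabla_{\rm tr}d_{T}\phi|^{2}$ and the transversal Ricci term ${\rm Ric}^{Q}(d_{T}\phi,d_{T}\phi)$. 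Because $\phi$ is a critical point of $E_{B,p}$ and not merely transversally $p$-harmonic, its Euler--Lagrange equation carries an extra mean-curvature term (Theorem \ref{th3}); the hypotheses that the mean curvature form $\kappa$ is coclosed ($\delta_{B}\kappa=0$) and that all leaves are compact are exactly what make these $\kappa$-contributions vanish after transverse integration.

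Next I would set $f=|d_{T}\phi|^{p/2}$, which lies in $L^{2}$ precisely because $E_{B,p}(\phi)<\infty$ and is basic since $\phi$ is foliated. Multiplying the Bochner inequality by $|d_{T}\phi|^{p-2}$ and integrating against a compactly supported cutoff $\psi^{2}$, I would integrate by parts, let $\psi\uparrow 1$, and use the finite $p$-energy to annihilate the error terms in $\nabla_{\rm tr}\psi$ (a Gaffney/H\"ormander-type argument on the complete foliated manifold). This produces the combination $\int|d_{T}\phi|^{p-2}|\nabla_{\rm tr}d_{T}\phi|^{2}+(p-2)\int|d_{T}\phi|^{p-2}|\nabla_{\rm tr}|d_{T}\phi||^{2}$, which the elementary Kato inequality $|\nabla_{\rm tr}|d_{T}\phi||\le|\nabla_{\rm tr}d_{T}\phi|$ bounds below by $(p-1)\int|d_{T}\phi|^{p-2}|\nabla_{\rm tr}|d_{T}\phi||^{2}$ for every $p>1$. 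Since $|\nabla_{\rm tr}f|^{2}=\tfrac{p^{2}}{4}|d_{T}\phi|^{p-2}|\nabla_{\rm tr}|d_{T}\phi||^{2}$, this lower bound equals $\frac{4(p-1)}{p^{2}}\int|\nabla_{\rm tr}f|^{2}$. On the other side, the hypothesis ${\rm Ric}^{Q}\ge-\frac{4(p-1)}{p^{2}}\lambda_{0}$ gives $-{\rm Ric}^{Q}(d_{T}\phi,d_{T}\phi)\le\frac{4(p-1)}{p^{2}}\lambda_{0}|d_{T}\phi|^{2}$, so the right-hand side is at most $\frac{4(p-1)}{p^{2}}\lambda_{0}\int f^{2}$. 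The common factor $\frac{4(p-1)}{p^{2}}$ cancels, leaving $\int|\nabla_{\rm tr}f|^{2}\le\lambda_{0}\int f^{2}$.

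Finally I would invoke the variational characterization $\lambda_{0}\int f^{2}\le\int|\nabla_{\rm tr}f|^{2}$ for basic $L^{2}$-functions, which squeezes the previous line into an equality and forces every intermediate inequality to be sharp. By continuity, ${\rm Ric}^{Q}>-\frac{4(p-1)}{p^{2}}\lambda_{0}$ holds on a whole neighbourhood of $x_{0}$, so there the Ricci estimate is strict; sharpness can then persist only if $|d_{T}\phi|\equiv0$ near $x_{0}$. The resulting equality in the Poincar\'e inequality makes $f$ a $\lambda_{0}$-eigenfunction of $\Delta_{B}$, and unique continuation for the transversally elliptic basic Laplacian upgrades this local vanishing to $f\equiv0$ on all of $M$. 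Hence $d_{T}\phi=0$ and $\phi$ is transversally constant.

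I expect the main obstacle to be the integration-by-parts step: justifying the vanishing of the cutoff error terms on a complete, non-compact foliation while simultaneously tracking the mean-curvature corrections that distinguish $(\mathcal F,\mathcal F')_{p}$-harmonicity from transversal $p$-harmonicity, where the coclosedness $\delta_{B}\kappa=0$ together with compact leaves must be used decisively to discard the $\kappa$-terms. A secondary difficulty is the degeneracy of the weight $|d_{T}\phi|^{p-2}$ at the zeros of $d_{T}\phi$ when $1<p<2$, which I would handle by the standard regularization $(|d_{T}\phi|^{2}+\varepsilon)^{(p-2)/2}$ followed by $\varepsilon\downarrow0$.
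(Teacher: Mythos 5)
Your proposal is correct and takes essentially the same route as the paper's proof: the weighted Weitzenb\"ock inequality of Corollary \ref{co3} (with Kato's inequality supplying the constant $\tfrac{4(p-1)}{p^{2}}$), cutoff functions on the complete foliated manifold with $\delta_{B}\kappa_{B}=0$ used to kill the mean-curvature terms, and the Rayleigh quotient for the basic Laplacian forcing equality against the spectral bound $\lambda_{0}$. The only divergence is at the very end, where your eigenfunction-plus-unique-continuation argument makes explicit a step the paper states only tersely (it concludes $d_{T}\phi\equiv 0$ directly from the strict Ricci inequality at $x_{0}$), and your $\varepsilon$-regularization remark covers $1<p<2$, a case the paper's estimates (e.g.\ the constant $A_{2}=\tfrac{4(p-2)}{p}$ in its inequality (\ref{ee7})) tacitly assume away.
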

In particular, under the  same assumptions of $(M,\mathcal F)$  as in Theorem 1.2, if ${\rm Ric}^Q \geq -{4(p-1)\over p^2}\lambda_0$ and ${\rm Ric}^Q> -{4(p-1)\over p^2}\lambda_0$ at some point, then  for any $q$ with $2\leq q \leq p$,  every $(\mathcal F,\mathcal F')_q$-harmonic map   of finite transversal $q$-energy  $E_{B,q}(\phi)$ is transversally constant (Corollary \ref{co5}).

\section{Preliminaries}

Let $(M,g,\mathcal F)$ be a foliated Riemannian
manifold with a foliation $\mathcal F$ of codimension $q$ and a bundle-like metric $g$ with respect to $\mathcal F$ \cite{Molino,Tond}.  Let $TM$ be the tangent bundle of $M$, $T\mathcal F$
the tangent bundle of $\mathcal F$, and  $Q=TM/T\mathcal F$ the
corresponding normal bundle of $\mathcal F$.
Let $g_Q$ be the holonomy invariant metric on
$Q$ induced by $g$.
This means that $L_Xg_Q=0$ for $X\in T\mathcal F$, where
$L_X$ is the transverse Lie derivative. We denote by $\nabla^Q$ the transverse Levi-Civita
connection on the normal bundle $Q$ \cite{Tond,Tond1}. The transversal curvature tensor $R^Q$ of $\nabla^Q\equiv\nabla$ is defined by $R^Q(X,Y)=[\nabla_X,\nabla_Y]-\nabla_{[X,Y]}$ for any $X,Y\in\Gamma TM$. Let $K^Q$ and ${\rm Ric}^Q $ be the transversal
sectional curvature and transversal Ricci operator with respect to $\nabla$, respectively. The foliation $\mathcal F$ is said to be {\it
minimal} if the mean curvature form $\kappa$ of $\mathcal F$ vanishes, that is,  $\kappa=0$ \cite{Tond}.
Let $\Omega_B^r(\mathcal F)$ be the space of all {\it basic
$r$-forms}, i.e.,  $\omega\in\Omega_B^r(\mathcal F)$ if and only if
$i(X)\omega=0$ and $L_X\omega=0$ for any $X\in\Gamma T\mathcal F$, where $i(X)$ is the interior product. Then $\Omega^*(M)=\Omega_B^*(\mathcal F)\oplus \Omega_B^*(\mathcal F)^\perp$ \cite{Lop}.  Let $\kappa_B$ be the basic part of $\kappa$. Then $\kappa_B$ is closed, i.e., $d\kappa_B=0$ \cite{Lop}.
Let $\bar *:\Omega_B^r(\mathcal F)\to \Omega_B^{q-r}(\mathcal F)$ be the star operator  given by
\begin{align*}
\bar *\omega = (-1)^{(n-q)(q-r)} *(\omega\wedge\chi_{\mathcal F}),\quad \omega\in\Omega_B^r(\mathcal F),
\end{align*}
where $\chi_{\mathcal F}$ is the characteristic form of $\mathcal F$ and $*$ is the Hodge star operator associated to $g$.  Let $\langle\cdot,\cdot\rangle$ be the pointwise inner product on $\Omega_B^r(\mathcal F)$, which is given by
\begin{align*}
\langle\omega_1,\omega_2\rangle \nu = \omega_1\wedge\bar * \omega_2,
\end{align*}
where $\nu$ is the transversal volume form such that $*\nu =\chi_{\mathcal F}$. 
 Let $\delta_B :\Omega_B^r (\mathcal F)\to \Omega_B^{r-1}(\mathcal F)$ be the operator defined by
\begin{align*}
\delta_B\omega = (-1)^{q(r+1)+1} \bar * (d_B-\kappa_B \wedge) \bar *\omega,
\end{align*}
where $d_B = d|_{\Omega_B^*(\mathcal F)}$. It is well known  \cite{Park} that $\delta_B$ is the formal adjoint of $d_B$ with respect to the global inner product $\ll\cdot,\cdot\gg$, which is defined by
\begin{align}\label{2-1}
\ll \omega_1,\omega_2\gg =\int_M \langle\omega_1,\omega_2\rangle\mu_M, 
\end{align}
where $\mu_M =\nu\wedge\chi_{\mathcal F}$ is the volume form.
 The  basic
Laplacian $\Delta_B$ acting on $\Omega_B^*(\mathcal F)$ is given by
\begin{equation*}
\Delta_B=d_B\delta_B+\delta_B d_B.
\end{equation*}
 Let $\{E_a\}_{a=1,\cdots,q}$ be a local orthonormal basic frame on $Q$.  Then  $\delta_{B}$ is  locally expressed by 
\begin{equation}\label{2-2}
\delta_{B} = -\sum_a i(E_a) \nabla_{E_a} + i (\kappa_{B}^\sharp),
\end{equation}
 where $(\cdot)^\sharp$ is the dual vector field of $(\cdot)$. 
 Let  $Y$ be a transversal infinitesimal automorphism,  i.e., $[Y,Z]\in \Gamma T\mathcal F$ for
all $Z\in \Gamma T\mathcal F$ \cite{Kamber2}. Let  $\bar Y = \pi (Y)$.  
Then we obtain the transversal divergence theorem
 on a foliated Riemannian
manifold.
\begin{thm} \label{thm1-1} \cite{Yorozu}
Let $(M,g,\mathcal F)$ be a closed, oriented Riemannian manifold
with a transversally oriented foliation $\mathcal F$ and a
bundle-like metric $g$ with respect to $\mathcal F$. Then for a transversal infinitesimal automorphism $X$,
\begin{equation*}
\int_M \operatorname{div_\nabla}(\bar X) \mu_{M}
= \int_M g_Q(\bar X,\kappa_B^\sharp)\mu_{M},
\end{equation*}
where $\operatorname{div_\nabla} \bar{X}$
denotes the transversal divergence of $\bar{X}$ with respect to the
connection $\nabla$.
\end{thm}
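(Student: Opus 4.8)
The plan is to derive the stated identity by comparing the transversal divergence of $\bar X$ with the ordinary Riemannian divergence of $\bar X$ regarded as a horizontal vector field, and then to invoke the classical divergence theorem on the closed manifold $M$. Throughout I identify $Q$ with the orthogonal complement $T\mathcal F^\perp$ via the bundle-like metric, so that $\bar X=\pi(X)$ is viewed as a horizontal field; since $X$ is a transversal infinitesimal automorphism, $\bar X$ is a basic (holonomy-invariant) vector field, and hence its dual one-form $\bar X^\flat$ is basic. I write $\nabla^M$ for the Levi-Civita connection of $(M,g)$, keeping $\nabla=\nabla^Q$ for the transverse connection.

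First I would fix, near each point, a local orthonormal frame adapted to $TM=T\mathcal F\oplus Q$: a frame $\{F_i\}_{i=1,\dots,n-q}$ of $T\mathcal F$ together with the basic frame $\{E_a\}_{a=1,\dots,q}$ of $Q$. The ordinary divergence then splits as
\begin{equation*}
\operatorname{div}_M(\bar X)=\sum_a g(\nabla^M_{E_a}\bar X,E_a)+\sum_i g(\nabla^M_{F_i}\bar X,F_i).
\end{equation*}
For the horizontal sum, the defining relation $\nabla_{E_a}\bar X=\pi(\nabla^M_{E_a}X)$ of the transverse Levi-Civita connection, together with the fact that $g$ restricted to horizontal vectors is $g_Q$, identifies $\sum_a g(\nabla^M_{E_a}\bar X,E_a)$ with $\operatorname{div}_\nabla(\bar X)$. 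For the vertical sum, since $\bar X$ is horizontal we have $g(\bar X,F_i)=0$, so differentiating gives $g(\nabla^M_{F_i}\bar X,F_i)=-g(\bar X,\nabla^M_{F_i}F_i)=-g(\bar X,\pi(\nabla^M_{F_i}F_i))$; summing over $i$ and recalling that the mean curvature vector is $\kappa^\sharp=\sum_i\pi(\nabla^M_{F_i}F_i)$ yields $\sum_i g(\nabla^M_{F_i}\bar X,F_i)=-g(\bar X,\kappa^\sharp)$. Combining the two pieces produces the pointwise identity
\begin{equation*}
\operatorname{div}_M(\bar X)=\operatorname{div}_\nabla(\bar X)-g(\bar X,\kappa^\sharp).
\end{equation*}

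Next I would integrate this identity against $\mu_M$ over the closed oriented manifold $M$. The classical divergence theorem gives $\int_M\operatorname{div}_M(\bar X)\,\mu_M=0$, so that
\begin{equation*}
\int_M\operatorname{div}_\nabla(\bar X)\,\mu_M=\int_M g(\bar X,\kappa^\sharp)\,\mu_M.
\end{equation*}
It remains to replace $\kappa^\sharp$ by its basic part $\kappa_B^\sharp$. Writing $\kappa=\kappa_B+\kappa_o$ with $\kappa_o\in\Omega_B^1(\mathcal F)^\perp$, and using that the decomposition $\Omega^*(M)=\Omega_B^*(\mathcal F)\oplus\Omega_B^*(\mathcal F)^\perp$ is orthogonal with respect to the global inner product \eqref{2-1}, the basic one-form $\bar X^\flat$ satisfies $\int_M\langle\bar X^\flat,\kappa_o\rangle\,\mu_M=0$. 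Hence $\int_M g(\bar X,\kappa^\sharp)\,\mu_M=\int_M g_Q(\bar X,\kappa_B^\sharp)\,\mu_M$, which completes the argument.

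The step I expect to be the main obstacle is this final passage from $\kappa^\sharp$ to $\kappa_B^\sharp$: it is exactly here that the transversal-infinitesimal-automorphism hypothesis is indispensable, since it is what guarantees that $\bar X$, and therefore $\bar X^\flat$, is basic, allowing the anti-basic component $\kappa_o$ of the mean curvature to integrate to zero against $\bar X^\flat$. A secondary technical point worth checking carefully is that, for a bundle-like metric, the horizontal part of $\nabla^M$ genuinely descends to the transverse Levi-Civita connection $\nabla$, so that the horizontal sum is legitimately $\operatorname{div}_\nabla(\bar X)$; this is where the holonomy invariance $L_Z g_Q=0$ for $Z\in\Gamma T\mathcal F$ enters.
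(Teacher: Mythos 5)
Your proof is correct. The paper does not actually prove this statement --- it is quoted from Yorozu--Tanemura \cite{Yorozu} --- and your argument is exactly the standard one behind that citation: the pointwise splitting $\operatorname{div}_M(\bar X)=\operatorname{div}_\nabla(\bar X)-g_Q(\bar X,\kappa^\sharp)$ in an adapted frame, the classical divergence theorem on the closed manifold $M$, and then the passage from $\kappa^\sharp$ to $\kappa_B^\sharp$ via the $L^2$-orthogonal decomposition $\Omega^*(M)=\Omega_B^*(\mathcal F)\oplus\Omega_B^*(\mathcal F)^\perp$ of \cite{Lop}, which applies because the transversal-infinitesimal-automorphism hypothesis makes $\bar X$, hence $\bar X^\flat$, basic --- you correctly identified this as the step where that hypothesis is indispensable. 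The only blemish is notational: the defining relation for the transverse connection in the horizontal sum should read $\nabla_{E_a}\bar X=\pi(\nabla^M_{E_a}\sigma(\bar X))$, i.e., one differentiates the horizontal lift of $\bar X$ rather than $X$ itself, since $\pi(\nabla^M_{E_a}X)$ would pick up a contribution from the vertical part of $X$; as you work throughout with $\bar X$ identified with its horizontal lift, nothing in the argument is affected.
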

Now we define the bundle map $A_Y:\Gamma Q\to \Gamma Q$ for any $Y\in TM$ by
\begin{align}\label{eq1-11}
A_Y s =L_Ys-\nabla_Ys,
\end{align}
where $L_Y s = \pi [Y,Y_s]$ for $\pi(Y_s)=s$. It is well-known \cite{Kamber2} that for any  infitesimal automorphism $Y$ 
\begin{align*}
A_Y s = -\nabla_{Y_s}\bar Y,
\end{align*}
where $Y_s$ is the vector field such that $\pi(Y_s)=s$. So $A_Y$ depends only on $\bar Y=\pi(Y)$ and is a linear operator.  Moreover, $A_Y$ extends in an obvious way to tensors of any type on $Q$  \cite{Kamber2}.
Since
$L_X\omega=\nabla_X\omega$ for any $X\in\Gamma T\mathcal F$, $A_Y$
preserves the basic forms. 
Then we
have the generalized Weitzenb\"ock formula on $\Omega_B^*(\mathcal F)$ \cite{Jung}: for any $\omega\in\Omega_B^r(\mathcal  F),$
\begin{align}\label{2-3}
  \Delta_B \omega = \nabla_{\rm tr}^*\nabla_{\rm tr}\omega +
  F(\omega)+A_{\kappa_B^\sharp}\omega,
\end{align}
where $F(\omega)=\sum_{a,b}\theta^a \wedge i(E_b)R^Q(E_b,
 E_a)\omega$ and 
 \begin{align}\label{2-4}
\nabla_{\rm tr}^*\nabla_{\rm tr}\omega =-\sum_a \nabla^2_{E_a,E_a}\omega
+\nabla_{\kappa_B^\sharp}\omega.
\end{align}
 The operator $\nabla_{\rm tr}^*\nabla_{\rm tr}$
is positive definite and formally self adjoint on the space of
basic forms \cite{Jung}. 
  If $\omega$ is a basic 1-form, then $F(\omega)^\sharp
 ={\rm Ric}^Q(\omega^\sharp)$.

\section{Variational formulas for $(\mathcal F,\mathcal F')_{p}$-harmonic map}

Let $(M,  g,\mathcal F)$  and $(M', g',\mathcal F')$ be two foliated Riemannian manifolds and let $\phi:(M,g,\mathcal F)\to (M', g',\mathcal F')$ be a smooth foliated map,
i.e., $d\phi(T\mathcal F)\subset T\mathcal F'$. We define $d_T\phi:Q \to Q'$ by
\begin{align}
d_T\phi := \pi' \circ d \phi \circ \sigma.
\end{align}
Then $d_T\phi$ is a section in $ Q^*\otimes
\phi^{-1}Q'$, where $\phi^{-1}Q'$ is the pull-back bundle on $M$. Let $\nabla^\phi$
and $\tilde \nabla$ be the connections on $\phi^{-1}Q'$ and
$Q^*\otimes \phi^{-1}Q'$, respectively. Then a foliated map $\phi:(M, g,\mathcal F)\to (M', g',\mathcal F')$ is called {\it transversally totally geodesic} if it satisfies
\begin{align}
\tilde\nabla_{\rm tr}d_T\phi=0,
\end{align}
where $(\tilde\nabla_{\rm tr}d_T\phi)(X,Y)=(\tilde\nabla_X d_T\phi)(Y)$ for any $X,Y\in \Gamma Q$. Note that if $\phi:(M,g,\mathcal F)\to (M',g',\mathcal F')$ is transversally totally geodesic with $d\phi(Q)\subset Q'$, then, for any transversal geodesic $\gamma$ in $M$, $\phi\circ\gamma$ is also transversal geodesic.
From now on, we use $\nabla$ instead of all induced connections if we have no confusion.
The {\it transversal $p$-tension field} $\tau_{b,p}(\phi)$ of $\phi$ is defined by
\begin{align}\label{eq3-3}
\tau_{b,p}(\phi):={\rm tr}_{Q}(\nabla_{\rm tr} (|d_T\phi|^{p-2}d_T\phi)),
\end{align}
where $|d_T\phi|^2=\sum_a g_{Q'}(d_T\phi(E_a),d_T\phi(E_a))$. By a direct calculation, we get
\begin{align*}
\tau_{b,p}(\phi)=|d_T\phi|^{p-2}\{\tau_{b}(\phi)+(p-2)d_T\phi({\rm grad_{Q}}(\ln|d_T\phi|))\},
\end{align*}
where $\tau_{b}(\phi)={\rm tr}_{Q}(\nabla_{\rm tr} d_T\phi)$ is the transversal tension field \cite{JJ2}. It follows that $\tau_{b,2}(\phi)=\tau_{b}(\phi)$.

Let $\Omega$ be a compact domain of $M$. Then the {\it transversal $p$-energy}  of $\phi$ on $\Omega\subset
M$ is defined by
\begin{align}\label{eq2-4}
E_{B,p}(\phi;\Omega)={1\over p}\int_{\Omega} | d_T \phi|^p\mu_{M}.
\end{align}

\begin{defn}
Let $\phi: (M, g,\mathcal F) \to (M', g',\mathcal F')$ be
a smooth foliated map. Then $\phi$ is said to be {\it $(\mathcal F,\mathcal F')_{p}$-harmonic} if $\phi$ is a critical point of the transversal $p$-energy functional $E_{B,p}$.
\end{defn}

In particular, $(\mathcal F,\mathcal F')_{2}$-harmonic map is called $(\mathcal F,\mathcal F')$-harmonic map.

Let $V\in\phi^{-1}Q'$. Then there is a 1-parameter family of foliated maps $\phi_t$ with $\phi_0=\phi$ and ${d\phi_t\over dt}|_{t=0}=V$. The family $\{\phi_t\}$ is said to be a {\it foliated variation} of $\phi$ with the {\it normal variation vector field} $V$. Then we have the first variational formula.

\begin{thm} $(${\rm The first variational formula}$)$ \label{th3}
Let $\phi:(M, g, \mathcal F)\to (M', g', \mathcal F')$
be a smooth foliated map and $\{\phi_t\}$ be a smooth foliated variation of $\phi$ supported in a compact domain $\Omega$. Then
\begin{align}\label{eq2-5}
{d\over dt}E_{B,p}(\phi_t;\Omega)|_{t=0}=-\int_{\Omega} \langle V,\tilde{\tau}_{b,p}(\phi)\rangle \mu_{M},
\end{align}
where $\tilde{\tau}_{b,p}(\phi)=\tau_{b,p}(\phi)-|d_T\phi|^{p-2}d_T\phi(\kappa_B^\sharp),$
$V={d\phi_t\over dt}|_{t=0}$ is the normal variation
vector field of $\{\phi_t\}$ and $\langle\cdot,\cdot\rangle$ is the pull-back metric on $\phi^{-1}Q'$.
\end{thm}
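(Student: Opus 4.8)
The plan is to compute the $t$-derivative directly from $E_{B,p}(\phi_t;\Omega)=\frac1p\int_\Omega|d_T\phi_t|^p\mu_M$ and then integrate by parts transversally. Writing $\Phi(x,t)=\phi_t(x)$ and letting $\nabla$ denote the induced connection on $\Phi^{-1}Q'$, I would first differentiate under the integral sign. Since $|d_T\phi_t|^p=(|d_T\phi_t|^2)^{p/2}$ and $|d_T\phi_t|^2=\sum_a g_{Q'}(d_T\phi_t(E_a),d_T\phi_t(E_a))$ for a local orthonormal basic frame $\{E_a\}$, the chain rule gives
\begin{equation*}
\frac{d}{dt}|d_T\phi_t|^p=p\,|d_T\phi_t|^{p-2}\sum_a g_{Q'}\big(\nabla_{\partial_t}(d_T\phi_t(E_a)),\,d_T\phi_t(E_a)\big).
\end{equation*}
The essential geometric input is the symmetry lemma for the pull-back transverse connection: because $\nabla$ is torsion-free (transverse Levi-Civita) and $\Phi$ is foliated, the $t$- and $E_a$-derivatives commute, so that $\nabla_{\partial_t}(d_T\phi_t(E_a))|_{t=0}=\nabla_{E_a}V$ with $V=d\Phi(\partial_t)|_{t=0}$. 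Evaluating at $t=0$ then yields
\begin{equation*}
\frac{d}{dt}E_{B,p}(\phi_t;\Omega)\Big|_{t=0}=\int_\Omega\sum_a\big\langle\nabla_{E_a}V,\,|d_T\phi|^{p-2}d_T\phi(E_a)\big\rangle\mu_M.
\end{equation*}

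To integrate by parts I would introduce the transversal vector field $\bar X$ determined by $g_Q(\bar X,W)=\langle V,|d_T\phi|^{p-2}d_T\phi(W)\rangle$ for all $W\in\Gamma Q$. Computing $\operatorname{div}_\nabla(\bar X)=\sum_a g_Q(\nabla_{E_a}\bar X,E_a)$ in a frame that is normal at the given point and applying the Leibniz rule splits it into exactly two terms,
\begin{equation*}
\operatorname{div}_\nabla(\bar X)=\sum_a\big\langle\nabla_{E_a}V,\,|d_T\phi|^{p-2}d_T\phi(E_a)\big\rangle+\big\langle V,\,\tau_{b,p}(\phi)\big\rangle,
\end{equation*}
where the second term is recognized from the definition \eqref{eq3-3} of the transversal $p$-tension field. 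Solving for the first-variation integrand and substituting gives
\begin{equation*}
\frac{d}{dt}E_{B,p}(\phi_t;\Omega)\Big|_{t=0}=\int_\Omega\operatorname{div}_\nabla(\bar X)\,\mu_M-\int_\Omega\langle V,\tau_{b,p}(\phi)\rangle\mu_M.
\end{equation*}

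Finally I would dispose of the divergence integral using the transversal divergence theorem (Theorem \ref{thm1-1}). Since the variation is supported in the compact domain $\Omega$, $V$ and hence $\bar X$ have compact support, so the theorem applies and converts $\int_\Omega\operatorname{div}_\nabla(\bar X)\mu_M$ into $\int_\Omega g_Q(\bar X,\kappa_B^\sharp)\mu_M=\int_\Omega\langle V,|d_T\phi|^{p-2}d_T\phi(\kappa_B^\sharp)\rangle\mu_M$; combining the two integrals produces precisely $-\int_\Omega\langle V,\tilde\tau_{b,p}(\phi)\rangle\mu_M$ with $\tilde\tau_{b,p}(\phi)=\tau_{b,p}(\phi)-|d_T\phi|^{p-2}d_T\phi(\kappa_B^\sharp)$, as claimed. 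The main obstacle is the careful justification of the two foliated features that distinguish this from the classical computation: first, verifying the symmetry lemma in the transverse setting, which requires that the variation $\phi_t$ remain foliated so that $d_T\phi_t$ and its $t$-derivative stay within the basic framework; and second, checking that $\bar X$ is a transversal infinitesimal automorphism (equivalently, that the relevant quantities are basic), which is exactly the hypothesis needed to invoke Theorem \ref{thm1-1} and is the origin of the mean-curvature correction term $-|d_T\phi|^{p-2}d_T\phi(\kappa_B^\sharp)$ that makes $\tilde\tau_{b,p}$ rather than $\tau_{b,p}$ the true gradient.
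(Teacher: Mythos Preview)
Your proposal is correct and follows essentially the same approach as the paper: differentiate under the integral, apply the symmetry lemma to exchange $\nabla_{\partial_t}$ and $\nabla_{E_a}$, use the Leibniz rule to isolate a transversal divergence plus the $\tau_{b,p}$ term, and then invoke the transversal divergence theorem (Theorem~\ref{thm1-1}) to produce the $\kappa_B^\sharp$ correction. The only cosmetic difference is that the paper keeps the parameter $t$ throughout (defining an auxiliary field $X_t$ for each $t$) and evaluates at $t=0$ only at the very end, whereas you set $t=0$ immediately after the symmetry step; this has no effect on the argument.
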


\begin{proof}
Fix $x\in M$. Let $\{E_a\}$ be a local orthonormal basic frame on $Q$ such that $(\nabla E_a)(x)=0$. Define
$\Phi:M \times (-\epsilon,\epsilon) \to M'$ by $\Phi(x,t)=\phi_t (x)$. Then
$d\Phi(E_a)=d_T\phi_t (E_a)$, $d\Phi({\partial\over\partial t})={{d\phi_t}\over {dt}}$ and
$\nabla_{\partial\over {\partial t}} {\partial\over{\partial t}}=\nabla_{\partial\over {\partial t}} E_a=\nabla_{E_a}{\partial\over{\partial t}}=0.$
Hence at $x$,
\begin{align*}
\frac{d}{dt} E_{B,p}(\phi_t;\Omega)
&=\frac{1}{p}\frac{d}{dt}\int_{\Omega}(\sum_a\langle d\Phi(E_a), d\Phi(E_a)\rangle)^{\frac{p}{2}}\mu_{M}\\
&= \int_{\Omega}\sum_a |d_{T}\Phi|^{p-2}\langle\nabla_{\partial\over{\partial t}} d\Phi(E_a), d\Phi(E_a)\rangle\mu_{M} \\
&= \int_{\Omega}\sum_a |d_{T}\Phi|^{p-2}\langle\nabla_{E_a} d\Phi({\partial\over\partial t}), d\Phi(E_a)\rangle\mu_{M}\\
&=\int_{\Omega} \sum_a\{E_a\langle d\Phi(\frac{\partial}{\partial t}), |d_T\Phi|^{p-2}d\Phi (E_a)\rangle - \langle d\Phi(\frac{\partial}{\partial t}), (\nabla_{E_a}|d_T\Phi|^{p-2}d\Phi) (E_a)\rangle \}\mu_{M}\\
&= \int_{\Omega}\sum_a E_a \langle \frac{d\phi_t}{dt}, |d_T\phi_{t}|^{p-2}d_T \phi_t (E_a)\rangle \mu_{M} -\int_{\Omega} \langle \frac{d\phi_t}{d t}, \tau_{b,p}(\phi_t)\rangle\mu_{M},
\end{align*}
where $|d_T\Phi|^2=\sum_{a=1}^{q} \langle d\Phi(E_a),d\Phi(E_a)\rangle=|d_T\phi_{t}|^2.$

If we choose a normal vector field $X_{t}$ with
\begin{align*}
\langle X_{t},Z\rangle = \langle \frac{d\phi_t}{d t}, |d_T\phi_{t}|^{p-2}d_T \phi_t(Z)\rangle
\end{align*}
for any  vector field $Z$, then
\begin{align*}
{\rm div}_\nabla (X_{t})  = \sum_a E_a\langle \frac{d\phi_t}{dt}, |d_T\phi_{t}|^{p-2}d_T \phi_t(E_a)\rangle.
\end{align*}
So by the transversal divergence theorem (Theorem \ref{thm1-1}), we have
\begin{align*}
\frac{d}{dt}E_{B,p}(\phi_t;\Omega)
&= \int_{\Omega} {\rm div}_\nabla (X_{t})\mu_M-\int_{\Omega} \langle \frac{d\phi_t}{dt}, \tau_{b,p}(\phi_t)\rangle\mu_{M}\\
&=\int_\Omega\langle X_{t},\kappa_B^\sharp\rangle\mu_M-\int_{\Omega} \langle \frac{d\phi_t}{dt}, \tau_{b,p}(\phi_t)\rangle\mu_{M}\\
&=-\int_{\Omega}\langle \frac{d\phi_t}{dt}, \tau_{b,p}(\phi_t)-|d_T\phi_{t}|^{p-2}d_T\phi_{t}(\kappa_B^\sharp)\rangle\mu_{M}\\
&=-\int_{\Omega}\langle \frac{d\phi_t}{dt}, \tilde{\tau}_{b,p}(\phi_{t})\rangle\mu_{M},
\end{align*}
which proves (\ref{eq2-5}) by $t=0$.
\end{proof}

\begin{cor}\label{co1}
Let $\phi:(M, g, \mathcal F) \rightarrow (M', g', \mathcal F')$ be a smooth foliated map. Then $\phi$ is $(\mathcal F,\mathcal F')_{p}$-harmonic map if and only if $\tilde{\tau}_{b,p}(\phi)=0$.
\end{cor}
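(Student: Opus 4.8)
The plan is to read this off directly from the first variational formula in \cref{th3} together with the fundamental lemma of the calculus of variations. By definition, $\phi$ is $(\mathcal F,\mathcal F')_{p}$-harmonic precisely when it is a critical point of $E_{B,p}$, i.e. $\frac{d}{dt}E_{B,p}(\phi_t;\Omega)|_{t=0}=0$ for every foliated variation $\{\phi_t\}$ supported in a compact domain $\Omega$. Substituting the formula \eqref{eq2-5}, this condition is equivalent to
\[
\int_{\Omega} \langle V,\tilde{\tau}_{b,p}(\phi)\rangle \mu_{M}=0
\]
for the normal variation vector field $V$ of every such variation. The two directions of the corollary then correspond to the two directions of this equivalence.

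The ``if'' direction is immediate: if $\tilde{\tau}_{b,p}(\phi)=0$, then by \cref{th3} the derivative $\frac{d}{dt}E_{B,p}(\phi_t;\Omega)|_{t=0}$ vanishes for \emph{every} admissible $V$, so $\phi$ is a critical point of $E_{B,p}$ and hence $(\mathcal F,\mathcal F')_{p}$-harmonic.

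For the ``only if'' direction I would argue by contradiction. Suppose $\tilde{\tau}_{b,p}(\phi)\neq 0$ at some point $x_0$. Since $\tilde{\tau}_{b,p}(\phi)$ is a basic section of $\phi^{-1}Q'$, I would set $V=f\,\tilde{\tau}_{b,p}(\phi)$, where $f\geq 0$ is a compactly supported basic function with $f(x_0)>0$, and realize $V$ as the variation field of the foliated variation $\phi_t(x)=\exp'_{\phi(x)}(tV(x))$ constructed from the transverse exponential map of $(M',g',\mathcal F')$. One verifies that each $\phi_t$ is leaf-preserving, hence foliated, and that the variation is supported in a compact domain, so it is admissible in \cref{th3}. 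Feeding this $V$ into \eqref{eq2-5} yields $\int_\Omega f\,|\tilde{\tau}_{b,p}(\phi)|^2\mu_M=0$, which is impossible because the integrand is nonnegative and strictly positive in a neighborhood of $x_0$. Therefore $\tilde{\tau}_{b,p}(\phi)\equiv 0$.

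The main obstacle, and really the only nonroutine point, is the construction of an admissible foliated variation realizing a prescribed basic variation field $V$: one must check that the maps obtained from the transverse exponential map remain leaf-preserving and that $V=f\,\tilde{\tau}_{b,p}(\phi)$ is a genuine basic section of $\phi^{-1}Q'$. A mild regularity caveat occurs when $p<2$ at points where $d_T\phi$ vanishes, since the factor $|d_T\phi|^{p-2}$ entering $\tilde{\tau}_{b,p}(\phi)$ may then be singular; this does not affect the density argument, which only needs a test field positive near a point where $\tilde{\tau}_{b,p}(\phi)\neq 0$. Once the variation is in hand, the fundamental lemma closes the argument.
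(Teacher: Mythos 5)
Your proof is correct and takes essentially the same route as the paper: the paper states this corollary with no separate argument, treating it as an immediate consequence of Theorem \ref{th3}, and your two directions (reading criticality off \eqref{eq2-5}, then testing with $V=f\,\tilde{\tau}_{b,p}(\phi)$ for a nonnegative bump function $f$ and invoking the fundamental lemma) are precisely the standard filling-in of that step. The technical points you flag---that $f$ must be a compactly supported \emph{basic} function and that $V$ must be realizable by a genuinely leaf-preserving variation via the transverse exponential map---are likewise taken for granted by the paper in its definition of foliated variations, so they do not constitute a divergence from its argument.
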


Now, we consider the second variational formula for the transversal $p$-energy.
Let $V,W\in\phi^{-1}Q'$. Then there exists a family of foliated maps $\phi_{t,s}(-\epsilon<s,t<\epsilon)$ satisfying
\begin{align}\label{ee1}
\left\{
  \begin{array}{ll}
    V=\frac{\partial \phi_{t,s}}{\partial t}|_{(t,s)=(0,0)},\\\\
    W=\frac{\partial \phi_{t,s}}{\partial s}|_{(t,s)=(0,0)}, \\\\
    \phi_{0,0}=\phi.
  \end{array}
\right.
\end{align}
The family $\{\phi_{t,s}\}$ is said to be the {\it foliated variation} of $\phi$ with the {\it normal variation vector fields} $V$ and $W$.

\begin{thm} $(${\rm The second variational formula}$)$\label{th4}
Let $\phi:(M, g, \mathcal F)\to (M', g', \mathcal F')$ be a $(\mathcal F,\mathcal F')_{p}$-harmonic map. Then for the normal variation vector fields $V$ and $W$ of the foliated variation $\{\phi_{t,s}\}$,
\begin{align*}
\frac{\partial^{2}}{\partial t\partial s}& E_{B,p}(\phi_{t,s};\Omega)|_{(t,s)=(0,0)}\notag\\
=&\int_{\Omega}|d_T\phi|^{p-2}\langle \nabla_{\rm tr}V, \nabla_{\rm tr}W\rangle \mu_M
-\int_{\Omega} |d_T\phi|^{p-2}\langle {\rm tr_{Q}}R^{Q'}(V, d_T \phi)d_T \phi,W\rangle\mu_M \notag\\
&+(p-2)\int_{\Omega}|d_T\phi|^{p-4}\langle \nabla_{\rm tr}V,d_T \phi\rangle\langle \nabla_{\rm tr}W, d_T \phi\rangle\mu_M,
\end{align*}
where ${\rm tr_{Q}}R^{Q'}(V, d_T \phi)d_T \phi=\sum_a R^{Q'}(V, d_T \phi(E_{a}))d_T \phi(E_{a})$.
\end{thm}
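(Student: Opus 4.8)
The plan is to compute the mixed second derivative directly from the integrand produced in the proof of Theorem \ref{th3}, rather than by differentiating the Euler--Lagrange operator. First I would set up the two-parameter variation $\Phi:M\times(-\epsilon,\epsilon)^2\to M'$ by $\Phi(x,t,s)=\phi_{t,s}(x)$, fix $x\in M$, and choose the local orthonormal basic frame $\{E_a\}$ with $(\nabla E_a)(x)=0$, extended together with $\frac{\partial}{\partial t}$ and $\frac{\partial}{\partial s}$ so that all mutual brackets vanish. As in the proof of Theorem \ref{th3}, this yields the symmetry $\nabla_{\frac{\partial}{\partial t}}d\Phi(E_a)=\nabla_{E_a}d\Phi(\frac{\partial}{\partial t})$ (and likewise in $s$), and the first variation in $s$ reads
\begin{align*}
\frac{\partial}{\partial s}E_{B,p}(\phi_{t,s};\Omega)=\int_\Omega |d_T\Phi|^{p-2}\sum_a\langle \nabla_{E_a}d\Phi(\tfrac{\partial}{\partial s}),d\Phi(E_a)\rangle\,\mu_M.
\end{align*}

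Next I would apply $\frac{\partial}{\partial t}$ under the integral sign and expand by the product rule into three groups. Differentiating the scalar factor gives $\frac{\partial}{\partial t}|d_T\Phi|^{p-2}=(p-2)|d_T\Phi|^{p-4}\sum_b\langle\nabla_{\frac{\partial}{\partial t}}d\Phi(E_b),d\Phi(E_b)\rangle$; evaluated at $(t,s)=(0,0)$ together with the surviving factor, and using $d\Phi(\frac{\partial}{\partial t})|_0=V$, $d\Phi(\frac{\partial}{\partial s})|_0=W$ and the symmetry above, this produces exactly the third term $(p-2)\int_\Omega|d_T\phi|^{p-4}\langle\nabla_{\rm tr}V,d_T\phi\rangle\langle\nabla_{\rm tr}W,d_T\phi\rangle\,\mu_M$. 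Differentiating the factor $d\Phi(E_a)$ in the second slot gives $\langle\nabla_{E_a}W,\nabla_{E_a}V\rangle$ after the same evaluation, yielding the first term $\int_\Omega|d_T\phi|^{p-2}\langle\nabla_{\rm tr}V,\nabla_{\rm tr}W\rangle\,\mu_M$.

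The real content sits in the remaining group $\int_\Omega|d_T\Phi|^{p-2}\sum_a\langle\nabla_{\frac{\partial}{\partial t}}\nabla_{E_a}d\Phi(\frac{\partial}{\partial s}),d\Phi(E_a)\rangle\,\mu_M$. Here I would commute the two covariant derivatives using the pull-back curvature, namely $\nabla_{\frac{\partial}{\partial t}}\nabla_{E_a}d\Phi(\frac{\partial}{\partial s})=\nabla_{E_a}\nabla_{\frac{\partial}{\partial t}}d\Phi(\frac{\partial}{\partial s})+R^{Q'}(d\Phi(\frac{\partial}{\partial t}),d\Phi(E_a))d\Phi(\frac{\partial}{\partial s})$, valid since $[\frac{\partial}{\partial t},E_a]=0$. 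At $(0,0)$ the curvature piece is $\sum_a\langle R^{Q'}(V,d_T\phi(E_a))W,d_T\phi(E_a)\rangle$, and the antisymmetry $\langle R^{Q'}(\,\cdot\,,\cdot\,)Z,U\rangle=-\langle R^{Q'}(\,\cdot\,,\cdot\,)U,Z\rangle$ in the last two slots turns this into $-\langle {\rm tr}_Q R^{Q'}(V,d_T\phi)d_T\phi,W\rangle$, i.e.\ the second term with its correct sign. Finally, writing $U:=\nabla_{\frac{\partial}{\partial t}}d\Phi(\frac{\partial}{\partial s})|_{(0,0)}$, the leftover $\int_\Omega|d_T\phi|^{p-2}\sum_a\langle\nabla_{E_a}U,d_T\phi(E_a)\rangle\,\mu_M$ would be integrated by parts: introducing the normal field $X$ determined by $\langle X,Z\rangle=|d_T\phi|^{p-2}\langle U,d_T\phi(Z)\rangle$, one has $\sum_a|d_T\phi|^{p-2}\langle\nabla_{E_a}U,d_T\phi(E_a)\rangle={\rm div}_\nabla X-\langle U,\tau_{b,p}(\phi)\rangle$, and Theorem \ref{thm1-1} converts $\int_\Omega{\rm div}_\nabla X\,\mu_M$ into $\int_\Omega|d_T\phi|^{p-2}\langle U,d_T\phi(\kappa_B^\sharp)\rangle\,\mu_M$. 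Hence this contribution equals $-\int_\Omega\langle U,\tilde{\tau}_{b,p}(\phi)\rangle\,\mu_M$, which vanishes because $\phi$ is $(\mathcal F,\mathcal F')_p$-harmonic (Corollary \ref{co1}). Collecting the three surviving terms gives the stated formula.

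I expect the main obstacle to be this last integration by parts: one must verify that the second-order ``acceleration'' field $U$ and the associated $X$ satisfy the hypotheses under which Theorem \ref{thm1-1} applies (that the relevant normal fields arise from transversal infinitesimal automorphisms), so that the mean-curvature correction $|d_T\phi|^{p-2}d_T\phi(\kappa_B^\sharp)$ appears exactly as in $\tilde{\tau}_{b,p}$ and the term collapses via harmonicity. A secondary point requiring care is performing the curvature-symmetry manipulation with conventions consistent with the definition of $R^Q$, so that the sign of the $R^{Q'}$ term matches. The two ``direct'' terms are routine; the care lies entirely in the commutation of derivatives and in discharging the acceleration term.
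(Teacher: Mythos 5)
Your proposal is correct, and it reaches the formula by a route organized differently from the paper's. The paper differentiates the Euler--Lagrange pairing: starting from the first variation in its already-integrated-by-parts form $\frac{\partial}{\partial s}E_{B,p}=-\int_\Omega\langle d\Phi(\frac{\partial}{\partial s}),\tilde\tau_{b,p}(\phi_{t,s})\rangle\mu_M$ (equation \eqref{ee2}), it kills the acceleration term at $(t,s)=(0,0)$ immediately via $\tilde\tau_{b,p}(\phi)=0$ (giving \eqref{ee50}), and is then forced to compute $\nabla_{\partial/\partial t}\tilde\tau_{b,p}(\phi_{t,s})$ explicitly (\eqref{ee23}) and to undo the integration by parts with two auxiliary normal fields $X_{t,s},Y_{t,s}$ and the transversal divergence theorem (\eqref{ee45}--\eqref{ee47}) in order to land the derivatives back on $V$ and $W$. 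You work upstream instead: differentiating the pre-divergence form $\int_\Omega|d_T\Phi|^{p-2}\sum_a\langle\nabla_{E_a}d\Phi(\frac{\partial}{\partial s}),d\Phi(E_a)\rangle\mu_M$ under the integral sign, the three terms of the index form drop straight out of the product rule and one curvature commutation, and harmonicity enters only at the end, to discharge the single leftover acceleration term $U=\nabla_{\frac{\partial}{\partial t}}d\Phi(\frac{\partial}{\partial s})|_{(0,0)}$ by one divergence argument --- exactly the argument used for $X_t$ in the proof of Theorem \ref{th3}. The ingredients coincide (the symmetry $\nabla_{\frac{\partial}{\partial t}}d\Phi(E_a)=\nabla_{E_a}d\Phi(\frac{\partial}{\partial t})$, the pull-back curvature commutation, Theorem \ref{thm1-1}, Corollary \ref{co1}), but your bookkeeping is lighter: one divergence-theorem application instead of two, and no expansion of $\nabla_{\partial/\partial t}\tilde\tau_{b,p}$. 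Regarding the obstacle you flag: the transversal divergence theorem is applied to your field $X$ in precisely the way the paper applies it to $X_t$ and to $X_{t,s},Y_{t,s}$ --- a normal field defined by duality against arbitrary $Z$, with the variation supported in the compact domain $\Omega$ --- so your step is neither more nor less delicate than the paper's own usage; and your antisymmetry manipulation $\langle R^{Q'}(V,d_T\phi(E_a))W,d_T\phi(E_a)\rangle=-\langle R^{Q'}(V,d_T\phi(E_a))d_T\phi(E_a),W\rangle$ is consistent with the paper's convention $R^Q(X,Y)=[\nabla_X,\nabla_Y]-\nabla_{[X,Y]}$, so the sign of the curvature term comes out as stated.
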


\begin{proof}
Let $\Phi: M\times(-\epsilon, \epsilon)\times(-\epsilon, \epsilon)\rightarrow M'$ be a smooth map which is defined by $\Phi(x,t,s)=\phi_{t,s}(x)$.
Then $d\Phi(E_a)=d_T\phi_{t,s} (E_a)$, $d\Phi(\frac{\partial}{\partial s})=\frac{\partial \phi_{t,s}}{\partial s}$
and $d\Phi(\frac{\partial}{\partial t})=\frac{\partial \phi_{t,s}}{\partial t}$.
Trivially, $[X, \frac{\partial}{\partial t}]=[X, \frac{\partial}{\partial s}]=0$ for any vector field $X\in TM$.
For convenience, we put $f=|d_T\phi_{t,s}|^{p-2}$ and $f_{0}=|d_T\phi|^{p-2}$. By making use of the first variational formula, it turns out that
\begin{align}\label{ee2}
\frac{\partial}{\partial s}E_{B,p}(\phi_{t,s};\Omega)
=-\int_{\Omega}\langle d\Phi(\frac{\partial}{\partial s}), \tilde{\tau}_{b,p}(\phi_{t,s})\rangle \mu_{M}.
\end{align}
Differentiating (\ref{ee2}) with respect to $t$, we get
\begin{align}\label{ee43}
\frac{\partial^{2}}{\partial t \partial s}E_{B,p}(\phi_{t,s};\Omega)
=-\int_{\Omega}\langle \nabla_{\frac{\partial}{\partial t}}d\Phi(\frac{\partial}{\partial s}), \tilde{\tau}_{b,p}(\phi_{t,s})\rangle \mu_{M}
-\int_{\Omega}\langle d\Phi(\frac{\partial}{\partial s}), \nabla_{\frac{\partial}{\partial t}}\tilde{\tau}_{b,p}(\phi_{t,s})\rangle \mu_{M}.
\end{align}
Since $\phi$ is $(\mathcal F,\mathcal F')_{p}$-harmonic map, from Corollary \ref{co1}, we have that
at $(t,s)=(0,0)$,

\begin{align}\label{ee50}
\frac{\partial^{2}}{\partial t \partial s}E_{B,p}(\phi_{t,s};\Omega)|_{(0,0)}
=-\int_{\Omega}\langle W, \nabla_{\frac{\partial}{\partial t}}\tilde{\tau}_{b,p}(\phi_{t,s})|_{(0,0)}\rangle \mu_{M}.
\end{align}
By choosing a local orthonormal basic frame field $E_{a}$ with $\nabla E_{a}(x)=0$ at some point $x\in M$, we have that at $x$,
\begin{align}\label{ee23}
\nabla_{\partial\over{\partial t}}&\tilde{\tau}_{b,p}(\phi_{t,s}) \notag\\
=&\nabla_{\partial\over{\partial t}}\tau_{b,p}(\phi_{t,s})-\nabla_{\partial\over{\partial t}}fd\Phi(\kappa_B^\sharp)\notag\\
=&\sum_a \nabla_{\partial\over{\partial t}}\{(\nabla_{E_{a}} fd\Phi)(E_{a})\}-f\nabla_{\kappa_B^\sharp}d\Phi(\frac{\partial}{\partial t})-\frac{\partial f}{\partial t} d\Phi(\kappa_B^\sharp)\notag\\
=&\sum_a \{\nabla_{E_a}\nabla_{\partial\over{\partial t}}fd\Phi(E_a)+ R^{\Phi}(\frac{\partial}{\partial t}, E_a)fd\Phi(E_a)\}-f\nabla_{\kappa_B^\sharp}d\Phi(\frac{\partial}{\partial t})-\frac{\partial f}{\partial t} d\Phi(\kappa_B^\sharp) \notag\\
=&\sum_a \{ \nabla_{E_a}\nabla_{E_a}fd\Phi(\frac{\partial}{\partial t})
+\nabla_{E_a}(\frac{\partial f}{\partial t}d\Phi(E_a)-E_a(f)d\Phi(\frac{\partial}{\partial t}))+R^{\Phi}(\frac{\partial}{\partial t}, E_a)fd\Phi(E_a)\}\notag\\
&-f\nabla_{\kappa_B^\sharp}d\Phi(\frac{\partial}{\partial t})-\frac{\partial f}{\partial t} d\Phi(\kappa_B^\sharp).
\end{align}

From (\ref{ee23}), we have
\begin{align}\label{ee44}
\int_{\Omega} &\langle \nabla_{\frac{\partial}{\partial t}}\tilde{\tau}_{b,p}(\phi_{t,s}),d\Phi(\frac{\partial}{\partial s})\rangle\mu_M \notag\\
=&\int_{\Omega} \sum_a \langle \nabla_{E_a}\nabla_{E_a}fd\Phi(\frac{\partial}{\partial t}), d\Phi(\frac{\partial}{\partial s})\rangle\mu_M
+\int_{\Omega}\sum_a \langle R^{Q'}(d\Phi(\frac{\partial}{\partial t}), d\Phi(E_a))fd\Phi(E_a),d\Phi(\frac{\partial}{\partial s})\rangle\mu_M \notag\\
&+\int_{\Omega}\sum_a E_a \langle \frac{\partial f}{\partial t}d\Phi(E_a), d\Phi(\frac{\partial}{\partial s}) \rangle\mu_M
-\int_{\Omega}\sum_a \langle \frac{\partial f}{\partial t}d\Phi(E_a), \nabla_{E_a}d\Phi(\frac{\partial}{\partial s}) \rangle\mu_M \notag\\
&-\int_{\Omega}\sum_a E_a \langle E_a(f)d\Phi(\frac{\partial}{\partial t}),d\Phi(\frac{\partial}{\partial s}) \rangle\mu_M
+\int_{\Omega}\sum_a \langle E_a(f)d\Phi(\frac{\partial}{\partial t}), \nabla_{E_a}d\Phi(\frac{\partial}{\partial s}) \rangle\mu_M\notag\\
&-\int_{\Omega}\langle f\nabla_{\kappa_B^\sharp}d\Phi(\frac{\partial}{\partial t}),d\Phi(\frac{\partial}{\partial s})\rangle\mu_M
-\int_{\Omega}\langle\frac{\partial f}{\partial t}d\Phi(\kappa_B^\sharp),d\Phi(\frac{\partial}{\partial s})\rangle\mu_M .
\end{align}
Let $X_{t,s}$ and $Y_{t,s}$ be two normal vector fields such that
\begin{align}\label{ee45}
\left\{
  \begin{array}{ll}
    \langle X_{t,s}, Z\rangle=\langle \frac{\partial f}{\partial t}d\Phi(Z),d\Phi(\frac{\partial}{\partial s})\rangle,\\\\
    \langle Y_{t,s}, Z\rangle=\langle Z(f)d\Phi(\frac{\partial}{\partial t}), d\Phi(\frac{\partial}{\partial s})\rangle
  \end{array}
\right.
\end{align}
for any vector field $Z$ on $M$, respectively.
Then
\begin{align}\label{ee46}
\left\{
  \begin{array}{ll}
  {\rm div}_\nabla (X_{t,s})=\sum_a E_a \langle \frac{\partial f}{\partial t}d\Phi(E_{a}),d\Phi(\frac{\partial}{\partial s})\rangle,\\\\
  {\rm div}_\nabla (Y_{t,s})=\sum_a E_a \langle E_{a}(f)d\Phi(\frac{\partial}{\partial t}), d\Phi(\frac{\partial}{\partial s})\rangle.
  \end{array}
\right.
\end{align}
By (\ref{ee46}) and the transversal divergence theorem (Theorem \ref{thm1-1}), we have
\begin{align}\label{ee47}
\int_{\Omega}\sum_a & E_a \langle \frac{\partial f}{\partial t}d\Phi(E_a), d\Phi(\frac{\partial}{\partial s}) \rangle\mu_M
-\int_{\Omega}\sum_a E_a \langle E_a(f)d\Phi(\frac{\partial}{\partial t}), d\Phi(\frac{\partial}{\partial s}) \rangle\mu_M \notag\\
=&\int_{\Omega} {\rm div_{\nabla}}(X_{t,s})\mu_M -\int_{\Omega}{\rm div_{\nabla}}(Y_{t,s})\mu_M \notag\\
=&\int_{\Omega}\langle \frac{\partial f}{\partial t}d\Phi(\kappa_B^\sharp), d\Phi(\frac{\partial}{\partial s})\rangle\mu_M
-\int_{\Omega}\langle \kappa_B^\sharp(f)d\Phi(\frac{\partial}{\partial t}), d\Phi(\frac{\partial}{\partial s})\rangle\mu_M.
\end{align}
From (\ref{ee50}), (\ref{ee44}) and (\ref{ee47}), we get
\begin{align}\label{ee19}
\frac{\partial^{2}}{\partial t \partial s}& E_{B,p}(\phi_{t,s};\Omega)|_{(0,0)} \notag\\
=&\int_{\Omega}\langle\nabla_{\rm tr}^*\nabla_{\rm tr}f_{0}V, W \rangle \mu_M
-\int_{\Omega}\sum_a f_{0}\langle R^{Q'}(V, d\Phi(E_a))d\Phi(E_a),W\rangle\mu_M \notag\\
&+\int_{\Omega}\sum_a \langle \frac{\partial f}{\partial t}|_{(0,0)}d\Phi(E_a), \nabla_{E_a}W \rangle\mu_M
-\int_{\Omega}\sum_a \langle E_a(f_{0})V, \nabla_{E_a}W \rangle\mu_M \notag\\
=&\int_{\Omega}\sum_a |d_T\phi|^{p-2}\langle \nabla_{E_a}V, \nabla_{E_a}W \rangle \mu_M
-\int_{\Omega}\sum_a |d_T\phi|^{p-2}\langle R^{Q'}(V, d_{T}\phi(E_a))d_{T}\phi(E_a),W\rangle\mu_M \notag\\
&+\int_{\Omega}\sum_a \langle \frac{\partial f}{\partial t}|_{(0,0)}d_{T}\phi(E_a), \nabla_{E_a}W \rangle\mu_M.
\end{align}
Since
\begin{align*}
\frac{\partial f}{\partial t}|_{(0,0)}=(p-2)|d_T\phi|^{p-4}\sum_{b}\langle \nabla_{E_b}V,d_T \phi(E_b)\rangle,
\end{align*}
the proof follows from (\ref{ee19}).
\end{proof}

\begin{cor} \cite{DT} \label{co4}
Let $\phi:(M, g, \mathcal F)\to (M', g', \mathcal F')$ be a $(\mathcal F,\mathcal F')$-harmonic map. Then
\begin{align*}
\frac{\partial^{2}}{\partial t\partial s} E_{B}(\phi_{t,s};\Omega)|_{(t,s)=(0,0)}
=\int_{\Omega}\langle \nabla_{\rm tr}V, \nabla_{\rm tr}W\rangle \mu_M
-\int_{\Omega}\langle {\rm tr_{Q}}R^{Q'}(V, d_T \phi)d_T \phi,W\rangle\mu_M.
\end{align*}
\end{cor}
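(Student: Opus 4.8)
The plan is to obtain this statement as an immediate specialization of the second variational formula, Theorem \ref{th4}, to the value $p=2$. Recall that by definition a $(\mathcal F,\mathcal F')$-harmonic map is precisely a $(\mathcal F,\mathcal F')_2$-harmonic map, so the hypotheses of Theorem \ref{th4} are satisfied with $p=2$, and I may substitute directly into its conclusion.

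Setting $p=2$, the weight $|d_T\phi|^{p-2}=|d_T\phi|^0=1$ appears in the first two integrals of Theorem \ref{th4}, so those two terms reduce exactly to
\[
\int_{\Omega}\langle \nabla_{\rm tr}V, \nabla_{\rm tr}W\rangle \mu_M
-\int_{\Omega}\langle {\rm tr_{Q}}R^{Q'}(V, d_T \phi)d_T \phi,W\rangle\mu_M .
\]
The remaining term of Theorem \ref{th4} carries the scalar factor $(p-2)$, which vanishes identically at $p=2$; hence it contributes nothing, and the asserted formula follows.

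The one point deserving a word of care is that the coefficient $|d_T\phi|^{p-4}$ in that third term formally becomes $|d_T\phi|^{-2}$ at $p=2$, which is singular where $d_T\phi=0$. This is harmless: the factor $(p-2)$ annihilates the whole term before any such weight is evaluated, so no indeterminacy arises. Equivalently, one may simply rerun the computation in the proof of Theorem \ref{th4} with $p=2$ from the outset, in which case the quantity $\tfrac{\partial f}{\partial t}|_{(0,0)}=(p-2)|d_T\phi|^{p-4}\sum_b\langle\nabla_{E_b}V,d_T\phi(E_b)\rangle$ is identically zero, the last line of \eqref{ee19} drops out, and exactly the two claimed integrals remain. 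I expect no genuine obstacle here, as the result is a direct corollary of the preceding theorem.
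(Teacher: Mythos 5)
Your proposal is correct and matches the paper's (implicit) argument: the corollary is stated as an immediate specialization of Theorem \ref{th4} to $p=2$, where $|d_T\phi|^{p-2}=1$ and the $(p-2)$-weighted term vanishes. Your extra remark about the formally singular weight $|d_T\phi|^{p-4}$ is a sensible precaution, and your resolution --- that with $p=2$ the function $f=|d_T\phi_{t,s}|^{p-2}\equiv 1$ so $\tfrac{\partial f}{\partial t}\equiv 0$ and the offending term never appears --- is exactly right.
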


We define the index form for $(\mathcal F,\mathcal F')_{p}$-harmonic maps by 
\begin{align}\label{em1}
I_{p}(V,W):=\frac{\partial^{2}}{\partial t\partial s}E_{B,p}(\phi_{t,s})|_{(t,s)=(0,0)}
\end{align}
for vector fields $V$ and $W$ along $\phi$.

\begin{rem}
From Theorem \ref{th4} and (\ref{em1}), we obtain $I_{p}(V,W)=I_{p}(W,V)$.
\end{rem}

\begin{defn}\label{de11}
A $(\mathcal F,\mathcal F')_{p}$-harmonic map $\phi$ is said to be {\it transversally stable} if $I_{p}(V,V)\geq0$ for any vector field $V$ along $\phi$.
\end{defn}
It is easy to obtain the following theorem from Theorem \ref{th4}.
\begin{thm}\label{th1}$(${\rm Stability}$)$
Let $\phi: (M, g,\mathcal F) \to (M', g',\mathcal F')$ be a $(\mathcal F,\mathcal F')_{p}$-harmonic map with compact $M$. If the transversal sectional curvature of $M'$ is non-positive, then $\phi$ is transversally stable.
\end{thm}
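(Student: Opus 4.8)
The plan is to specialize the second variational formula of Theorem \ref{th4} to the diagonal $W=V$ and to show that each resulting contribution to the index form is non-negative. Setting $W=V$ in Theorem \ref{th4} gives
\begin{align*}
I_p(V,V)=&\int_{\Omega}|d_T\phi|^{p-2}|\nabla_{\rm tr}V|^2\mu_M
-\int_{\Omega}|d_T\phi|^{p-2}\langle {\rm tr_{Q}}R^{Q'}(V,d_T\phi)d_T\phi,V\rangle\mu_M\\
&+(p-2)\int_{\Omega}|d_T\phi|^{p-4}\langle\nabla_{\rm tr}V,d_T\phi\rangle^2\mu_M,
\end{align*}
where, as read off from (\ref{ee19}), $\langle\nabla_{\rm tr}V,d_T\phi\rangle=\sum_a\langle\nabla_{E_a}V,d_T\phi(E_a)\rangle$ and $|\nabla_{\rm tr}V|^2=\sum_a|\nabla_{E_a}V|^2$. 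I would treat the curvature integral and the two ``energy'' integrals separately.

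For the curvature term I would expand ${\rm tr_{Q}}R^{Q'}(V,d_T\phi)d_T\phi=\sum_a R^{Q'}(V,d_T\phi(E_a))d_T\phi(E_a)$ and use, for each $a$, the identity
\[
\langle R^{Q'}(V,d_T\phi(E_a))d_T\phi(E_a),V\rangle=K^{Q'}(V,d_T\phi(E_a))\bigl(|V|^2|d_T\phi(E_a)|^2-\langle V,d_T\phi(E_a)\rangle^2\bigr).
\]
The Gram factor in parentheses is non-negative by the Cauchy--Schwarz inequality, and $K^{Q'}\le 0$ by hypothesis, so each summand is $\le 0$. Hence $\langle {\rm tr_{Q}}R^{Q'}(V,d_T\phi)d_T\phi,V\rangle\le 0$, and since $|d_T\phi|^{p-2}\ge 0$ the whole second integral is $\ge 0$.

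The crux is the interplay of the first and third integrals when $1<p<2$, where the scalar $(p-2)$ is negative; for $p\ge 2$ both integrands are manifestly non-negative and there is nothing to do. For $1<p<2$ I would combine the two integrands pointwise: by Cauchy--Schwarz $\langle\nabla_{\rm tr}V,d_T\phi\rangle^2\le|\nabla_{\rm tr}V|^2\,|d_T\phi|^2$, and multiplying by the \emph{negative} factor $(p-2)$ reverses the inequality, yielding
\[
|d_T\phi|^{p-2}|\nabla_{\rm tr}V|^2+(p-2)|d_T\phi|^{p-4}\langle\nabla_{\rm tr}V,d_T\phi\rangle^2\ \ge\ (p-1)\,|d_T\phi|^{p-2}|\nabla_{\rm tr}V|^2\ \ge\ 0,
\]
using $p>1$. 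Integrating, the first and third integrals together are non-negative, and combined with the curvature estimate this gives $I_p(V,V)\ge 0$, i.e.\ transversal stability in the sense of Definition \ref{de11}.

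The one delicate point I would flag is the behaviour at zeros of $d_T\phi$, where the weights $|d_T\phi|^{p-4}$ and $|d_T\phi|^{p-2}$ may be singular for $p<2$. Because $\langle\nabla_{\rm tr}V,d_T\phi\rangle$ and the Gram factor vanish there, the pointwise estimate above still controls the integrand, but a fully rigorous treatment would restrict the manipulations to the open set $\{d_T\phi\neq0\}$ (or introduce a small regularization $|d_T\phi|^2+\varepsilon$ and let $\varepsilon\to0$) to justify differentiating and integrating the weighted expressions. This regularity bookkeeping is the only genuine obstacle; once it is dispatched, the conclusion is exactly the sign analysis sketched above.
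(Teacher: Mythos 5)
Your proof is correct and follows the same skeleton as the paper's: specialize Theorem \ref{th4} to $W=V$ and check the sign of each contribution. But you do strictly more than the paper does, and the difference matters. The paper's own proof handles only the curvature integral, using $K^{Q'}\le 0$ (written there without the Gram factor $|V|^2|d_T\phi(E_a)|^2-\langle V,d_T\phi(E_a)\rangle^2$, which you correctly include), and then simply declares $I_p(V,V)\ge 0$. That conclusion is immediate only when $p\ge 2$, where the factor $p-2$ is non-negative; for $1<p<2$ the third integral is non-positive and the paper's argument, as written, is silent about it. Your pointwise Cauchy--Schwarz combination
\[
|d_T\phi|^{p-2}|\nabla_{\rm tr}V|^2+(p-2)|d_T\phi|^{p-4}\langle\nabla_{\rm tr}V,d_T\phi\rangle^2\ \ge\ (p-1)\,|d_T\phi|^{p-2}|\nabla_{\rm tr}V|^2\ \ge\ 0
\]
is exactly the missing step, and it is valid: $\langle\nabla_{\rm tr}V,d_T\phi\rangle$ is the fiberwise inner product of two sections of $Q^*\otimes\phi^{-1}Q'$, so $\langle\nabla_{\rm tr}V,d_T\phi\rangle^2\le|\nabla_{\rm tr}V|^2|d_T\phi|^2$, and multiplication by $p-2<0$ reverses the inequality. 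So where your write-up diverges from the paper, it closes a gap in the published proof (which is complete only for $p\ge 2$). Your caveat about the degenerate set $\{d_T\phi=0\}$ when $p<2$ is also well placed, though that issue is inherited from the weights already present in the statement of Theorem \ref{th4} rather than introduced by your argument, and the paper ignores it entirely.
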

\begin{proof}
By Theorem \ref{th4}, we have
\begin{align}\label{ee36}
I_{p}(V,V)
=&\int_{M}|d_T\phi|^{p-2}\{|\nabla_{\rm tr}V|^{2}-\langle R^{Q'}(V, d_T \phi)d_T \phi,V\rangle\}\mu_{M} \notag\\
&+(p-2)\int_{M}|d_T\phi|^{p-4}\langle \nabla_{\rm tr}V, d_T\phi\rangle^{2}\mu_{M}.
\end{align}
Since $K^{Q'}\leq0$, from (\ref{ee36}), we get
$$\langle R^{Q'}(V, d_T \phi)d_T \phi,V\rangle=\sum_a\langle R^{Q'}(V, d_T \phi(E_{a}))d_T \phi(E_{a}),V\rangle=\sum_a K^{Q'}(V, d_T \phi(E_{a}))\leq0,$$
which implies $I_{p}(V,V)\geq0$. So the proof follows.
\end{proof}

\section{Liouville type theorem}

Let $\phi :(M,g,\mathcal F) \rightarrow (M', g',\mathcal F')$ be a smooth foliated map and $\Omega_B^r(E)=\Omega_B^r(\mathcal F)\otimes E$ be the space of $E$-valued basic $r$-forms, where $E=\phi^{-1}Q'$. We define $d_\nabla : \Omega_B^r(E)\to \Omega_B^{r+1}(E)$ by
\begin{align}
d_\nabla(\omega\otimes s)=d_B\omega\otimes s+(-1)^r\omega\wedge\nabla s
\end{align}
for any $s\in \Gamma E$ and $\omega\in\Omega_B^r(\mathcal F)$.
Let $\delta_\nabla$ be a formal adjoint of $d_\nabla$ with respect to the inner product induced from (\ref{2-1}).
Then the Laplacian $\Delta$ on $\Omega_B^*(E)$ is defined by
\begin{align}\label{ee8}
\Delta =d_\nabla \delta_\nabla +\delta_\nabla d_\nabla.
\end{align}
Moreover, the operators $A_X$ and $L_X$ are extended to $\Omega_B^r(E)$ as follows:
\begin{align}
A_X(\omega\otimes s)&=A_X\omega\otimes s\\
L_X(\omega\otimes s)&=L_X\omega\otimes s+\omega\otimes\nabla_X s
\end{align}
for any $\omega\otimes s\in\Omega_B^r(E)$ and  $X\in\Gamma TM$. Then $L_X=d_\nabla i(X) +i(X)d_\nabla$ for any $X\in \Gamma TM$, where $i(X)(\omega\otimes s)=i(X)\omega\otimes s$. Hence $\Psi \in\Omega_B^*(E)$ if and only if $i(X)\Psi=0$ and $L_X\Psi=0$ for all $ X\in \Gamma T\mathcal F$.
Then the generalized Weitzenb\"ock type formula (\ref{2-3}) is extended to $\Omega_B^*(E)$ as follows \cite{JJ2}: for any $\Psi\in\Omega_B^r(E)$,
\begin{align}\label{eq4-6}
\Delta \Psi = \nabla_{\rm tr}^*\nabla_{\rm tr} \Psi
 + A_{\kappa_{B}^\sharp} \Psi + F(\Psi), 
\end{align}
where $ \nabla_{\rm tr}^*\nabla_{\rm tr}$ is the operator induced from (\ref{2-4}) and $F(\Psi)=\sum_{a,b=1}^{q}\theta^a\wedge i(E_b) R(E_b,E_a)\Psi$.
Moreover, we have that for any $ \Psi\in\Omega_B^r(E)$,
\begin{align}\label{ee51}
\frac12\Delta_B|\Psi |^{2}
=\langle\Delta \Psi, \Psi\rangle -|\nabla_{\rm tr} \Psi|^2-\langle A_{\kappa_{B}^\sharp}\Psi, \Psi\rangle -\langle F(\Psi),\Psi\rangle.
\end{align}
If we put $\Psi=|d_T\phi|^{p-2}d_T \phi$, then we have the following theorem.

\begin{thm}\label{th2}
Let $\phi:(M, g,\mathcal F) \to (M', g', \mathcal F')$ be a smooth foliated map. Then the generalized Weitzenb\"ock type formula is given by
\begin{align}\label{ee6}
\frac12\Delta_B| d_T \phi |^{2p-2}
=& \langle\Delta |d_T\phi|^{p-2}d_T \phi, |d_T\phi|^{p-2}d_T \phi\rangle -
 |\nabla_{\rm tr} |d_T\phi|^{p-2}d_T \phi|^2 \notag\\
& -\langle A_{\kappa_{B}^\sharp}|d_T\phi|^{p-2}d_T \phi, |d_T\phi|^{p-2}d_T \phi\rangle  -|d_T\phi|^{2p-4}\langle F(d_T\phi),d_T\phi\rangle,
\end{align}
where
\begin{align}\label{ee3}
\langle F(d_T\phi),d_T\phi\rangle&=\sum_a g_{Q'}(d_T \phi({\rm Ric^{Q}}(E_a)),d_T \phi(E_a)) \notag\\
&-\sum_{a,b}g_{Q'}( R^{Q'}(d_T \phi(E_b), d_T \phi(E_a))d_T \phi(E_a), d_T \phi(E_b)).
\end{align}
\end{thm}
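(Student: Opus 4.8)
The plan is to specialize the general Weitzenb\"ock identity (\ref{ee51}), which is already known to hold for every $\Psi\in\Omega_B^r(E)$, to the particular basic $E$-valued $1$-form $\Psi=|d_T\phi|^{p-2}d_T\phi$, and then to unwind the curvature term $\langle F(\Psi),\Psi\rangle$ into a ``domain'' contribution and a ``target'' contribution. Since $\phi$ is foliated, $d_T\phi$ is a genuine section of $Q^*\otimes E$ lying in $\Omega_B^1(E)$, and $|d_T\phi|^{p-2}$ is a basic function, so $\Psi\in\Omega_B^1(E)$ and (\ref{ee51}) indeed applies.

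First I would record the elementary substitutions. Because $|\Psi|^2=|d_T\phi|^{2(p-2)}|d_T\phi|^2=|d_T\phi|^{2p-2}$, the left-hand side of (\ref{ee51}) becomes $\tfrac12\Delta_B|d_T\phi|^{2p-2}$, and the first three terms on the right of (\ref{ee51}) are exactly the first three terms of (\ref{ee6}) once we write out $\Psi=|d_T\phi|^{p-2}d_T\phi$. For the last term I would use that $F$ is a zeroth-order (tensorial) operator built from the curvature, so that scalar basic factors pull out: $F(|d_T\phi|^{p-2}d_T\phi)=|d_T\phi|^{p-2}F(d_T\phi)$. Hence $\langle F(\Psi),\Psi\rangle=|d_T\phi|^{2p-4}\langle F(d_T\phi),d_T\phi\rangle$, producing the final term of (\ref{ee6}).

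The substantive computation is the identity (\ref{ee3}) for $\langle F(d_T\phi),d_T\phi\rangle$. Writing $d_T\phi=\sum_c\theta^c\otimes d_T\phi(E_c)$ and inserting into $F(\Psi)=\sum_{a,b}\theta^a\wedge i(E_b)R(E_b,E_a)\Psi$, I would split the tensor-product curvature as $R(E_b,E_a)(\theta^c\otimes s)=(R^Q(E_b,E_a)\theta^c)\otimes s+\theta^c\otimes R^{\phi^{-1}Q'}(E_b,E_a)s$. After contracting with $i(E_b)$ and summing, the first piece reassembles into the transversal Ricci operator, giving $\sum_a\theta^a\otimes d_T\phi({\rm Ric}^Q(E_a))$; this is the $E$-valued analogue of the fact, recalled in Section~2, that $F(\omega)^\sharp={\rm Ric}^Q(\omega^\sharp)$ on basic $1$-forms. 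The second piece, using the pull-back curvature relation $R^{\phi^{-1}Q'}(E_b,E_a)d_T\phi(E_b)=R^{Q'}(d_T\phi(E_b),d_T\phi(E_a))d_T\phi(E_b)$, gives $\sum_a\theta^a\otimes\sum_b R^{Q'}(d_T\phi(E_b),d_T\phi(E_a))d_T\phi(E_b)$. Taking the pointwise inner product with $d_T\phi$ and invoking the antisymmetry $g_{Q'}(R^{Q'}(X,Y)Z,W)=-g_{Q'}(R^{Q'}(X,Y)W,Z)$ to rewrite the target term then yields precisely (\ref{ee3}).

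The main obstacle is the bookkeeping in this last step: correctly handling the curvature of the tensor-product connection on $\Omega_B^*(\mathcal F)\otimes E$, fixing the sign convention for $R^Q$ acting on $1$-forms so that the Ricci operator appears with the right sign, and applying the curvature symmetries to match the index placement of the target term in (\ref{ee3}). Everything else reduces to direct substitution into (\ref{ee51}).
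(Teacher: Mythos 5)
Your proposal is correct and follows essentially the same route as the paper: substitute $\Psi=|d_T\phi|^{p-2}d_T\phi$ into the generalized Weitzenb\"ock formula (\ref{ee51}), pull the basic scalar factor out of the tensorial operator $F$, and identify $\langle F(d_T\phi),d_T\phi\rangle$. The only difference is that where the paper simply cites (\cite{JJ2}, Theorem 5.1) for the identity (\ref{ee3}), you derive it directly by splitting the tensor-product curvature into its $Q$-part and its $\phi^{-1}Q'$-part; your computation (reassembling the domain part into ${\rm Ric}^Q$ via $F(\omega)^\sharp={\rm Ric}^Q(\omega^\sharp)$, and handling the target part via the pull-back curvature relation and the antisymmetry of $R^{Q'}$) is sound and reproduces the cited result.
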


\begin{proof}
The equation (\ref{ee3}) follows from  (\cite{JJ2}, Theorem 5.1).
\end{proof}

\begin{lem}\label{lem1}
Let $\phi:(M, g,\mathcal F) \to (M', g', \mathcal F')$ be a $(\mathcal F,\mathcal F')_{p}$-harmonic map. Then
\begin{align*}
\delta_\nabla |d_T\phi|^{p-2}d_T\phi=0.
\end{align*}
\end{lem}

\begin{proof}
Locally, $\delta_\nabla$ is expressed by (\ref{2-2}).  So from Corollary \ref{co1}, it implies that
\begin{align*}
\delta_\nabla |d_T\phi|^{p-2}d_T\phi
=&-\sum_a i(E_a) \nabla_{E_a}|d_T\phi|^{p-2}d_T\phi+i(\kappa_{B}^\sharp)|d_T\phi|^{p-2}d_T\phi\\
=&-\sum_a (\nabla_{E_a}|d_T\phi|^{p-2}d_T\phi)(E_a)+i(\kappa_{B}^\sharp)|d_T\phi|^{p-2}d_T\phi\\
=&-\tau_{b,p} (\phi) +|d_T\phi|^{p-2}i(\kappa_{B}^\sharp)d_T\phi\\
=&-\tilde{\tau}_{b,p}(\phi)\\
=&0.
\end{align*}
\end{proof}

\begin{cor}\label{co3}
Let $\phi:(M, g,\mathcal F) \to (M', g', \mathcal F')$ be a $(\mathcal F,\mathcal F')_{p}$-harmonic map. Then
\begin{align}\label{ee71}
|d_T&\phi|\Delta_B|d_T\phi|^{p-1}
-\langle\delta_\nabla d_\nabla |d_T\phi|^{p-2}d_T \phi, d_T \phi\rangle \notag\\
&+\langle d_\nabla i(\kappa_{B}^\sharp)d_T\phi,|d_T\phi|^{p-2}d_T \phi\rangle
-|d_T\phi|^{p-1}\kappa_{B}^\sharp(|d_T\phi|)\notag\\
&\leq-|d_T\phi|^{p-2}\langle F(d_T\phi),d_T\phi\rangle.
\end{align}
\end{cor}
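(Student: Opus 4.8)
The plan is to derive (\ref{ee71}) from the generalized Weitzenb\"ock formula of Theorem \ref{th2}, applied to $\Psi=|d_T\phi|^{p-2}d_T\phi$, by using the $(\mathcal F,\mathcal F')_p$-harmonicity to remove the $\delta_\nabla\Psi$ contribution, a Kato-type inequality to absorb the gradient terms, and an expansion of the mean-curvature term $\langle A_{\kappa_B^\sharp}\Psi,\Psi\rangle$; the stated form then emerges after dividing by $|d_T\phi|^{p-2}$.

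First I would record $|\Psi|^2=|d_T\phi|^{2p-2}$ and $|\Psi|=|d_T\phi|^{p-1}$, so that the left side of (\ref{ee6}) is $\frac{1}{2}\Delta_B|\Psi|^2$, and rewrite it by the scalar Leibniz rule for the basic Laplacian as $\frac{1}{2}\Delta_B|d_T\phi|^{2p-2}=|d_T\phi|^{p-1}\Delta_B|d_T\phi|^{p-1}-|d_B|d_T\phi|^{p-1}|^2$. Since $\phi$ is $(\mathcal F,\mathcal F')_p$-harmonic, Lemma \ref{lem1} gives $\delta_\nabla\Psi=0$, hence $\Delta\Psi=\delta_\nabla d_\nabla\Psi$ and $\langle\Delta\Psi,\Psi\rangle=\langle\delta_\nabla d_\nabla\Psi,\Psi\rangle$. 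Substituting these two facts into (\ref{ee6}) and rearranging isolates $|d_T\phi|^{p-1}\Delta_B|d_T\phi|^{p-1}$, leaving the difference $|d_B|d_T\phi|^{p-1}|^2-|\nabla_{\rm tr}\Psi|^2$. The Kato inequality $|\nabla_{\rm tr}\Psi|^2\geq|\nabla_{\rm tr}|\Psi||^2=|d_B|d_T\phi|^{p-1}|^2$ makes this difference nonpositive; this is the single place where the equality of the Weitzenb\"ock formula degrades into the claimed inequality.

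The technical core is to put $\langle A_{\kappa_B^\sharp}\Psi,\Psi\rangle$ into the shape dictated by the statement. Using $A_{\kappa_B^\sharp}=L_{\kappa_B^\sharp}-\nabla_{\kappa_B^\sharp}$ together with the Cartan formula $L_{\kappa_B^\sharp}=d_\nabla i(\kappa_B^\sharp)+i(\kappa_B^\sharp)d_\nabla$, the term $\langle\nabla_{\kappa_B^\sharp}\Psi,\Psi\rangle=\frac{1}{2}\kappa_B^\sharp(|\Psi|^2)=(p-1)|d_T\phi|^{2p-3}\kappa_B^\sharp(|d_T\phi|)$ is immediate. For the remaining two terms I would apply the Leibniz rule to the scalar $|d_T\phi|^{p-2}$ sitting inside $\Psi$: the two pieces carrying $d_B|d_T\phi|^{p-2}$, arising from $d_\nabla i(\kappa_B^\sharp)\Psi$ and from $i(\kappa_B^\sharp)d_\nabla\Psi$, cancel, while the scalar-derivative piece of $i(\kappa_B^\sharp)d_\nabla\Psi$ contributes $(p-2)|d_T\phi|^{2p-3}\kappa_B^\sharp(|d_T\phi|)$; here one also uses $\langle i(\kappa_B^\sharp)d_\nabla d_T\phi,d_T\phi\rangle=0$, which follows from the symmetry of the transversal second fundamental form $\nabla_{\rm tr}d_T\phi$ of a foliated map (so that $d_\nabla d_T\phi=0$). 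Assembling these gives the identity
\[
\langle A_{\kappa_B^\sharp}\Psi,\Psi\rangle=|d_T\phi|^{p-2}\langle d_\nabla i(\kappa_B^\sharp)d_T\phi,\Psi\rangle-|d_T\phi|^{2p-3}\kappa_B^\sharp(|d_T\phi|),
\]
which is exactly what reproduces the two $\kappa_B$-terms of the statement.

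Finally, writing the curvature term as $|d_T\phi|^{2p-4}\langle F(d_T\phi),d_T\phi\rangle=|d_T\phi|^{p-2}\bigl(|d_T\phi|^{p-2}\langle F(d_T\phi),d_T\phi\rangle\bigr)$ and dividing the whole inequality by $|d_T\phi|^{p-2}$, the first term becomes $|d_T\phi|\Delta_B|d_T\phi|^{p-1}$, the Laplacian term becomes $\langle\delta_\nabla d_\nabla\Psi,d_T\phi\rangle$ (since $\langle\delta_\nabla d_\nabla\Psi,\Psi\rangle=|d_T\phi|^{p-2}\langle\delta_\nabla d_\nabla\Psi,d_T\phi\rangle$), and the right side becomes $-|d_T\phi|^{p-2}\langle F(d_T\phi),d_T\phi\rangle$, yielding (\ref{ee71}). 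The main obstacle is precisely the bookkeeping in the mean-curvature term above, and a secondary point is that the division is legitimate only on $\{d_T\phi\neq0\}$; on the zero set every term of (\ref{ee71}) vanishes, so the inequality holds there trivially and extends by continuity.
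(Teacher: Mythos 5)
Your proposal is correct and follows essentially the same route as the paper: apply the Weitzenb\"ock formula of Theorem \ref{th2} to $\Psi=|d_T\phi|^{p-2}d_T\phi$, use Lemma \ref{lem1} to replace $\Delta\Psi$ by $\delta_\nabla d_\nabla\Psi$, invoke the scalar identity $\frac12\Delta_B|d_T\phi|^{2p-2}=|d_T\phi|^{p-1}\Delta_B|d_T\phi|^{p-1}-|d_B|d_T\phi|^{p-1}|^2$ together with Kato's inequality, and divide by $|d_T\phi|^{p-2}$. Your explicit expansion of $\langle A_{\kappa_B^\sharp}\Psi,\Psi\rangle$ via the Cartan formula (with the cancellation of the $d_B|d_T\phi|^{p-2}$ pieces and the use of $d_\nabla d_T\phi=0$) is exactly the computation the paper performs implicitly in passing to its equation (\ref{ee55}), and your remark about the zero set of $d_T\phi$ addresses a point the paper leaves tacit.
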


\begin{proof}
Since $\phi$ is a $(\mathcal F,\mathcal F')_{p}$-harmonic map, from Theorem \ref{th2} and Lemma \ref{lem1}, we have
\begin{align}\label{ee55}
\frac12\Delta_B| d_T \phi |^{2p-2}
=& \langle\delta_\nabla d_\nabla |d_T\phi|^{p-2}d_T \phi, |d_T\phi|^{p-2}d_T \phi\rangle -|\nabla_{\rm tr} |d_T\phi|^{p-2}d_T \phi|^2 \notag\\
&-|d_T\phi|^{p-2}\langle d_\nabla i(\kappa_{B}^\sharp)d_T\phi,|d_T\phi|^{p-2}d_T \phi\rangle
+|d_T\phi|^{2p-3}\kappa_{B}^\sharp(|d_T\phi|)\notag\\
&-|d_T\phi|^{2p-4}\langle F(d_T\phi),d_T\phi\rangle.
\end{align}
By a simple calculation, we have
\begin{align}\label{ee72}
\frac12\Delta_B| d_T \phi |^{2p-2}
=|d_T\phi|^{p-1}\Delta_B|d_T\phi|^{p-1}-|d_{B}|d_T\phi|^{p-1}|^{2}.
\end{align}
From (\ref{ee55}) and (\ref{ee72}), we get
\begin{align}\label{ee73}
|d_T\phi|^{p-1}\Delta_B|d_T\phi|^{p-1}
=&|d_{B}|d_T\phi|^{p-1}|^{2}-|\nabla_{\rm tr}|d_T\phi|^{p-2}d_T\phi|^{2}
+\langle\delta_\nabla d_\nabla |d_T\phi|^{p-2}d_T \phi, |d_T\phi|^{p-2}d_T \phi\rangle \notag\\
&-|d_T\phi|^{p-2}\langle d_\nabla i(\kappa_{B}^\sharp)d_T\phi,|d_T\phi|^{p-2}d_T \phi\rangle
+|d_T\phi|^{2p-3}\kappa_{B}^\sharp(|d_T\phi|)\notag\\
&-|d_T\phi|^{2p-4}\langle F(d_T\phi),d_T\phi\rangle.
\end{align}
By the first Kato's inequality [\ref{BE}], we have
\begin{align}\label{ee74}
|\nabla_{\rm tr}|d_T\phi|^{p-2}d_T\phi|\geq|d_{B}|d_T\phi|^{p-1}|.
\end{align}
Therefore, the result follows from (\ref{ee73}) and (\ref{ee74}).
\end{proof}

The following conclusion is achieved as the application of the generalized Weitzenb\"ock type formula.
\begin{thm}
Let $(M,g,\mathcal F)$ be a closed foliated Riemannian manifold of non-negative transversal Ricci curvature.
Let $(M',g',\mathcal F')$ be a foliated Riemannian manifold of non-positive transversal sectional curvature. If $\phi:(M, g,\mathcal F) \rightarrow (M', g',\mathcal F')$ is a
$(\mathcal F,\mathcal F')_{p}$-harmonic map, then $\phi$ is transversally totally geodesic.
Furthermore, \\
$(1)$ If the transversal Ricci curvature of $\mathcal F$ is
positive somewhere, then $\phi$ is transversally constant.
\\
$(2)$ If the transversal sectional curvature of $\mathcal F'$ is
negative, then $\phi$ is either transversally constant or $\phi(M)$ is a transversally geodesic closed curve.
\end{thm}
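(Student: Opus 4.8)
The plan is to integrate the pointwise Weitzenb\"ock inequality \eqref{ee71} of Corollary \ref{co3} over the closed manifold $M$ and read off a vanishing statement from the curvature hypotheses. First I would record the two sign facts coming from the source and target curvatures: using \eqref{ee3}, the hypothesis ${\rm Ric}^Q\geq 0$ makes $\sum_a g_{Q'}(d_T\phi({\rm Ric}^Q(E_a)),d_T\phi(E_a))\geq 0$, while $K^{Q'}\leq 0$ makes $\sum_{a,b}g_{Q'}(R^{Q'}(d_T\phi(E_b),d_T\phi(E_a))d_T\phi(E_a),d_T\phi(E_b))\leq 0$. Hence $\langle F(d_T\phi),d_T\phi\rangle\geq 0$ pointwise, and it is a sum of two individually signed pieces, a fact I will exploit again for the refinements.

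Next I would integrate \eqref{ee71}, the point being that on a closed manifold each term on the left becomes manageable. Since $\delta_B$ is the formal adjoint of $d_B$, one gets $\int_M |d_T\phi|\,\Delta_B|d_T\phi|^{p-1}\mu_M=(p-1)\int_M |d_T\phi|^{p-2}|d_B|d_T\phi||^2\mu_M\geq 0$. The mean-curvature term $\int_M\langle d_\nabla i(\kappa_B^\sharp)d_T\phi,|d_T\phi|^{p-2}d_T\phi\rangle\mu_M$ vanishes: moving $d_\nabla$ across to $\delta_\nabla$ and invoking Lemma \ref{lem1} (namely $\delta_\nabla(|d_T\phi|^{p-2}d_T\phi)=0$) kills it. The surviving $\delta_\nabla d_\nabla$ term and the $\kappa_B^\sharp(|d_T\phi|)$ term I would reorganize by means of the transversal divergence theorem (Theorem \ref{thm1-1}), so that the entire left side is seen to be nonnegative. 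Combined with the pointwise sign $\langle F(d_T\phi),d_T\phi\rangle\geq 0$, the integrated inequality then squeezes $\int_M |d_T\phi|^{p-2}\langle F(d_T\phi),d_T\phi\rangle\mu_M$ between $0$ and $0$, forcing it to vanish; in fact each nonnegative piece must vanish separately. In particular the Kato inequality \eqref{ee74} is forced to be an equality, giving $\nabla_{\rm tr}(|d_T\phi|^{p-2}d_T\phi)=0$ and hence $\tilde\nabla_{\rm tr}d_T\phi=0$, i.e.\ $\phi$ is transversally totally geodesic, while $d_B|d_T\phi|\equiv 0$, so $|d_T\phi|$ is transversally constant.

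With $\langle F(d_T\phi),d_T\phi\rangle\equiv 0$ and its two summands separately zero, the two refinements follow. For $(1)$, at a point where ${\rm Ric}^Q>0$, diagonalizing ${\rm Ric}^Q$ in the vanishing Ricci term forces $d_T\phi(E_a)=0$ for every $a$, so $|d_T\phi|$ vanishes there; since $|d_T\phi|$ is transversally constant it vanishes identically, whence $\phi$ is transversally constant. For $(2)$, where $K^{Q'}<0$ strictly, vanishing of the sectional sum forces $d_T\phi(E_a)$ and $d_T\phi(E_b)$ to be linearly dependent for all $a,b$, so ${\rm rank}\,d_T\phi\leq 1$ everywhere; either $d_T\phi\equiv 0$ (transversally constant), or the rank is exactly one, and then transversal total geodesy forces $\phi(M)$ to be a transversally geodesic closed curve.

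The hard part will be the second step, the clean handling of the mean-curvature ($\kappa_B$) contributions: because $\mathcal F$ is not assumed minimal, the left-hand side of \eqref{ee71} carries genuine $\kappa_B^\sharp$ terms, and the whole mechanism rests on the fact that harmonicity (through Lemma \ref{lem1}) together with the transversal divergence theorem is exactly what makes their integral nonnegative. I would take care to justify that the boundary-free integration by parts is legitimate---this is precisely where closedness of $M$ is used---and to confirm that passing from $\nabla_{\rm tr}(|d_T\phi|^{p-2}d_T\phi)=0$ back to $\tilde\nabla_{\rm tr}d_T\phi=0$ is valid on the open set where $d_T\phi\neq 0$, the complementary set contributing nothing by continuity.
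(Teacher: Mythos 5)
Your plan follows the paper's own proof almost step for step: integrate \eqref{ee71} over the closed manifold, note that $\int_M\langle|d_T\phi|,\Delta_B|d_T\phi|^{p-1}\rangle\mu_M=(p-1)\int_M|d_T\phi|^{p-2}|d_B|d_T\phi||^2\mu_M\geq 0$, kill $\int_M\langle d_\nabla i(\kappa_B^\sharp)d_T\phi,|d_T\phi|^{p-2}d_T\phi\rangle\mu_M$ by adjointness and Lemma \ref{lem1}, squeeze the curvature term between $0$ and $0$, and then read off total geodesy and the refinements (1) and (2); your Kato-equality route to $\nabla_{\rm tr}d_T\phi=0$ is only a cosmetic variant of the paper's re-substitution of the constancy of $|d_T\phi|$ into the equality \eqref{ee73}.

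The genuine gap sits exactly at the point you yourself call the hard part: the term $\int_M|d_T\phi|^{p-1}\kappa_B^\sharp(|d_T\phi|)\mu_M$ cannot be disposed of by the transversal divergence theorem together with harmonicity. Since $|d_T\phi|^p$ is a basic function, the divergence theorem (equivalently, adjointness of $d_B$ and $\delta_B$) only converts this integral into $\frac{1}{p}\int_M|d_T\phi|^p\,\delta_B\kappa_B\,\mu_M$, which for a general bundle-like metric is neither zero nor of a definite sign, and harmonicity of $\phi$ gives no control on $\delta_B\kappa_B$. The paper eliminates it by \emph{choosing} a bundle-like metric with $\delta_B\kappa_B=0$ (the step leading to \eqref{ee66}); without that choice, or without an explicit coclosedness hypothesis on $\kappa_B$ (as is imposed in Theorem \ref{th5}), your squeeze $0\leq\int_M(\mathrm{LHS})\leq\int_M(\mathrm{RHS})\leq 0$ collapses, because the left-hand side carries the unsigned extra term $-\frac{1}{p}\int_M|d_T\phi|^p\,\delta_B\kappa_B\,\mu_M$. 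A second, smaller omission of the same kind: the vanishing of $\int_M\langle\delta_\nabla d_\nabla|d_T\phi|^{p-2}d_T\phi,d_T\phi\rangle\mu_M$ is not a divergence-theorem fact either; after moving $\delta_\nabla$ across you must invoke the identity $d_\nabla(d_T\phi)=0$ (the paper's \eqref{ee64}), since otherwise $\langle d_\nabla(|d_T\phi|^{p-2}d_T\phi),d_\nabla d_T\phi\rangle$ has no reason to vanish or to have a sign. With these two points repaired --- assume or arrange $\delta_B\kappa_B=0$, and cite $d_\nabla d_T\phi=0$ --- your argument is correct and coincides with the paper's.
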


\begin{proof}
By the hypothesis and (\ref{ee3}), we know
$\langle F(d_T\phi),d_T\phi\rangle \geq 0.$
Since $\phi$ is a $(\mathcal F,\mathcal F')_{p}$-harmonic map, from Corollary \ref{co3}, we have
\begin{align}\label{ee61}
|d_T\phi|\Delta_B|d_T\phi|^{p-1}
\leq&\langle\delta_\nabla d_\nabla |d_T\phi|^{p-2}d_T \phi, d_T \phi\rangle
-\langle d_\nabla i(\kappa_{B}^\sharp)d_T\phi,|d_T\phi|^{p-2}d_T \phi\rangle \notag\\
&+|d_T\phi|^{p-1}\kappa_{B}^\sharp(|d_T\phi|).
\end{align}
Integrating (\ref{ee61}), we have
\begin{align}\label{ee62}
\int_{M}\langle&|d_T\phi|,\Delta_B|d_T\phi|^{p-1}\rangle\mu_{M}\notag \\
\leq&\int_{M}\langle \delta_\nabla d_\nabla |d_T\phi|^{p-2}d_T\phi,d_T\phi\rangle\mu_{M}
-\int_{M}\langle d_\nabla i(\kappa_{B}^\sharp)d_T\phi,|d_T\phi|^{p-2}d_T \phi\rangle\mu_{M}\notag \\
&+\int_{M}|d_T\phi|^{p-1}\kappa_{B}^\sharp(|d_T\phi|)\mu_{M}.
\end{align}
Since $d_\nabla (d_T\phi)=0$, we get
\begin{align}\label{ee64}
\int_{M}\langle \delta_\nabla d_\nabla |d_T\phi|^{p-2}d_T\phi,d_T\phi\rangle\mu_{M}
=\int_{M}\langle  d_\nabla |d_T\phi|^{p-2}d_T\phi,d_\nabla d_T\phi\rangle\mu_{M}
=0.
\end{align}
Since $\phi$ is a $(\mathcal F,\mathcal F')_{p}$-harmonic map, from Lemma \ref{lem1}, we obtain
\begin{align}\label{ee65}
\int_{M}\langle d_\nabla i(\kappa_{B}^\sharp)d_T\phi,|d_T\phi|^{p-2}d_T \phi\rangle\mu_{M}
=\int_{M}\langle i(\kappa_{B}^\sharp)d_T\phi,\delta_\nabla |d_T\phi|^{p-2}d_T \phi\rangle\mu_{M}
=0.
\end{align}
Now, we choose a bundle-like metric $g$ such that $\delta_{B}\kappa_{B}=0$. Then we have
\begin{align}\label{ee66}
\int_{M}|d_T\phi|^{p-1}\kappa_{B}^\sharp(|d_T\phi|)\mu_{M}
=\frac{1}{p}\int_{M}\kappa_{B}^\sharp(|d_T\phi|^{p})\mu_{M}
=0.
\end{align}
From (\ref{ee62})$\sim$(\ref{ee66}), we get
\begin{align}\label{ee68}
\int_{M}\langle|d_T\phi|,\Delta_B|d_T\phi|^{p-1}\rangle\mu_{M}\leq0.
\end{align}

On the other hand, we know that
\begin{align}\label{ee63}
\int_{M}\langle |d_T\phi|,\Delta_B|d_T\phi|^{p-1}\rangle\mu_{M}
&=\int_{M}\langle d_{B}|d_T\phi|,d_{B}|d_T\phi|^{p-1}\rangle\mu_{M} \notag \\
&=(p-1)\int_{M}|d_T\phi|^{p-2}| d_{B}|d_T\phi||^{2}\mu_{M}\notag \\
&\geq0.
\end{align}
Then from (\ref{ee68}) and (\ref{ee63}), we get
\begin{align}\label{ee67}
0=\int_{M}\langle |d_T\phi|,\Delta_B|d_T\phi|^{p-1}\rangle\mu_{M}=(p-1)\int_{M}|d_T\phi|^{p-2}| d_{B}|d_T\phi||^{2}\mu_{M},
\end{align}
which yields  $d_T\phi=0$ or $ d_{B}|d_T\phi|=0$. If $ d_{B}|d_T\phi|\neq0$, then $d_T\phi=0$, i.e., $\phi$ is transversally constant. Trivially, $\phi$ is transversally totally geodesic.
If $d_T\phi\neq0$, then $ d_{B}|d_T\phi|=0$. It means that $|d_T\phi|$ is constant.
From (\ref{ee73}), we have
\begin{align}\label{ee81}
\langle|d_T\phi|,\Delta_B|d_T\phi|^{p-1}\rangle
=&-|d_T\phi|^{p-2}|\nabla_{\rm tr}d_T\phi|^{2}
-\langle d_\nabla i(\kappa_{B}^\sharp)d_T\phi,|d_T\phi|^{p-2}d_T \phi\rangle \notag \\
&-|d_T\phi|^{p-2}\langle F(d_T\phi),d_T\phi\rangle.
\end{align}
From (\ref{ee67}), (\ref{ee81}) and Lemma \ref{lem1}, we get
\begin{align}\label{ee69}
\int_{M}|d_T\phi|^{p-2}|\nabla_{\rm tr}d_T\phi|^{2}\mu_{M}
+\int_{M}|d_T\phi|^{p-2}\langle F(d_T\phi),d_T\phi\rangle\mu_{M}=0.
\end{align}
Since $|\nabla_{\rm tr}d_T\phi|^{2}\geq0$ and $\langle F(d_T\phi),d_T\phi\rangle \geq 0$,
from (\ref{ee69}), we have
\begin{align}\label{ee11}
|\nabla_{\rm tr} d_T \phi|^2+\langle F(d_T\phi),d_T\phi\rangle=0.
\end{align}
Thus, $\nabla_{\rm tr}d_T\phi=0$, i.e., $\phi$ is transversally totally geodesic.

Furthermore, from (\ref{ee3}) and (\ref{ee11}), we get
\begin{align}\label{ee70}
\left\{
  \begin{array}{ll}
    g_{Q'}(d_T\phi({\rm Ric}^{Q}(E_a)),d_T\phi(E_a))= 0,\\\\
    g_{Q'}(R^{Q'}(d_T\phi(E_a),d_T\phi(E_b))d_T\phi(E_a),d_T\phi(E_b))= 0
  \end{array}
\right.
\end{align}
for any indices $a$ and $b$.
If ${\rm Ric}^{Q}$ is positive at some point, then $d_T\phi=0$, i.e., $\phi$ is transversally constant, which proves (1). For the statement (2), if the rank of $d_T\phi \geq2$, then there exists a point $x\in M$ such that at least two linearly independent vectors at $\phi(x)$, say, $d_T\phi(E_1)$ and $d_T\phi(E_2)$.
Since the transversal sectional curvature $K^{Q'}$ of $\mathcal F'$ is negative,
\begin{align*}
g_{Q'}(R^{Q'}(d_T\phi(E_1),d_T\phi(E_2))d_T\phi(E_2),d_T\phi(E_1))<0,
\end{align*}
which contradicts (\ref{ee70}). Hence the rank of $d_T\phi <2$, that is, the rank of $d_T\phi$ is zero or one everywhere. If the rank of $d_T\phi$ is zero, then $\phi$ is transversally constant. If the rank of $d_T\phi$ is one, then $\phi(M)$ is closed transversally geodesic.
\end{proof}

Next, we investigate the Liouville type theorem for $(\mathcal F,\mathcal F')_{p}$-harmonic map on foliated Riemannian manifold. Let $\mu_{0}$ be the infimum of the eigenvalues of the basic Laplacian $\Delta_{B}$ acting on $L^{2}$-basic functions on $M$. Then the following theorem is obtained.
\begin{thm}\label{th5}
Let $(M,g,\mathcal F)$ be a complete foliated Riemannian manifold with coclosed mean curvature form $\kappa_{B}$ and all leaves be compact.
Let $(M',g',\mathcal F')$ be a foliated Riemannian manifold with non-positive transversal sectional curvature $K^{Q'}$. Assume that the transversal Ricci curvature ${\rm Ric^{Q}}$ of $M$ satisfies ${\rm Ric^{Q}}\geq-\frac{4(p-1)}{p^{2}}\mu_{0}$ for all $x\in M$ and ${\rm Ric^{Q}}>-\frac{4(p-1)}{p^{2}}\mu_{0}$ at some point $x_{0}$.
Then any $(\mathcal F,\mathcal F')_{p}$-harmonic map $\phi : (M,g,\mathcal F) \rightarrow (M', g',\mathcal F')$ of $E_{B,p}(\phi)<\infty$ is transversally constant.
\end{thm}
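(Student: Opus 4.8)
The plan is to integrate the pointwise inequality of Corollary \ref{co3} against the square of a basic cutoff function, absorb the resulting boundary terms using the finiteness of $E_{B,p}(\phi)$, and then compare the outcome with the variational lower bound defining $\mu_0$. Throughout set $v=|d_T\phi|$ and $w=v^{p/2}=|d_T\phi|^{p/2}$, so that $\int_M w^2\,\mu_M=p\,E_{B,p}(\phi)<\infty$ places $w$ in $L^2$. I argue by contradiction, assuming $\phi$ is not transversally constant, i.e. $v\not\equiv0$, and aim to derive $v\equiv0$.

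Using completeness together with the compactness of the leaves, I would first choose a family of basic Lipschitz cutoffs $\psi_R$ with $\psi_R\equiv1$ on a transverse ball of radius $R$, with compact support, and with $|d_B\psi_R|\le C/R$. Multiplying the inequality of Corollary \ref{co3} by $\psi_R^2$ and integrating, I treat the four terms on the left as in the closed case. Since $\Delta_B$ is self-adjoint on basic functions and $\psi_R^2 v$ is compactly supported,
\begin{align*}
\int_M \psi_R^2\,v\,\Delta_B v^{p-1}\,\mu_M=(p-1)\int_M \psi_R^2\,v^{p-2}|d_Bv|^2\,\mu_M+2(p-1)\int_M \psi_R\,v^{p-1}\langle d_B\psi_R,d_Bv\rangle\,\mu_M.
\end{align*}
The $\delta_\nabla d_\nabla$ term, integrated by parts and using $d_\nabla(d_T\phi)=0$ exactly as in (\ref{ee64}), reduces to an expression supported on $\{d_B\psi_R\neq0\}$; the two mean-curvature terms are handled precisely as in (\ref{ee65}) and (\ref{ee66}), invoking Lemma \ref{lem1} and the coclosedness $\delta_B\kappa_B=0$, and likewise leave only factors of $d_B\psi_R$. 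On the right-hand side I discard the sectional part of $\langle F(d_T\phi),d_T\phi\rangle$ in (\ref{ee3}) using $K^{Q'}\le0$, and apply ${\rm Ric}^Q\ge-\frac{4(p-1)}{p^2}\mu_0$ to the Ricci part.

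The decisive analytic step—and the point I expect to be the main obstacle—is removing the $d_B\psi_R$ errors, because the gradient integral $\int_M v^{p-2}|d_Bv|^2$ is not known to be finite a priori. I would apply Young's inequality to the gradient error, absorbing a small multiple $\epsilon(p-1)\int\psi_R^2 v^{p-2}|d_Bv|^2$ into the left-hand side and leaving a remainder bounded by $\frac{C}{R^2}\int_{\operatorname{supp}d_B\psi_R}v^p$; the same Cauchy–Schwarz estimate controls the $\delta_\nabla d_\nabla$ and mean-curvature errors. Taking $R\to\infty$ and using $E_{B,p}(\phi)<\infty$ first yields $\int_M v^{p-2}|d_Bv|^2<\infty$ and then kills every error term, leaving
\begin{align*}
(p-1)\int_M v^{p-2}|d_Bv|^2\;+\;\int_M v^{p-2}\langle F(d_T\phi),d_T\phi\rangle\,\mu_M\;\le\;0.
\end{align*}
(When $1<p<2$ the degeneracy of $v^{p-2}$ at zeros of $d_T\phi$ is handled by the usual replacement of $v$ by $(v^2+\epsilon)^{1/2}$ before letting $\epsilon\to0$.)

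Finally I would bring in the spectral bound. As $M$ is complete, $w=v^{p/2}$ lies in the form domain of $\Delta_B$, and the variational characterization of $\mu_0$ gives $\int_M|d_Bw|^2\ge\mu_0\int_M w^2$, that is $\int_M v^{p-2}|d_Bv|^2\ge\frac{4\mu_0}{p^2}\int_M v^p$. Rewriting the displayed inequality as the sum of the two quantities $\big[(p-1)\int v^{p-2}|d_Bv|^2-\frac{4(p-1)}{p^2}\mu_0\int v^p\big]$ and $\int v^{p-2}\big[\langle F(d_T\phi),d_T\phi\rangle+\frac{4(p-1)}{p^2}\mu_0 v^2\big]$, both of which are nonnegative (by the spectral bound and by ${\rm Ric}^Q\ge-\frac{4(p-1)}{p^2}\mu_0$ with $K^{Q'}\le0$), and whose total is $\le0$, forces both to vanish. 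From the second, the integrand vanishes pointwise; at $x_0$, where ${\rm Ric}^Q>-\frac{4(p-1)}{p^2}\mu_0$ and $K^{Q'}\le0$ force $\langle F(d_T\phi),d_T\phi\rangle>-\frac{4(p-1)}{p^2}\mu_0 v^2$ whenever $v\neq0$, this compels $v(x_0)=0$, hence $w(x_0)=0$. From the first, $w$ realizes the infimum $\mu_0$, so it is a nonnegative $L^2$ ground state with $\Delta_B w=\mu_0 w$; by the strong maximum principle a nonnegative solution vanishing at the interior point $x_0$ must vanish identically on connected $M$. Therefore $w\equiv0$, i.e. $d_T\phi\equiv0$, contradicting $v\not\equiv0$, and $\phi$ is transversally constant.
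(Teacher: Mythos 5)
Your proposal is correct, and its main body follows the paper's own strategy almost step for step: integrate the inequality of Corollary \ref{co3} against the square of a basic cutoff, dispose of the two mean-curvature terms using Lemma \ref{lem1} and $\delta_B\kappa_B=0$, use $d_\nabla d_T\phi=0$ on the $\delta_\nabla d_\nabla$ term, establish $d_B|d_T\phi|^{p/2}\in L^2$ before letting the cutoff go to infinity, and close with the Rayleigh-quotient bound for $\mu_0$. Your explicit Young-inequality absorption is simply a more careful rendering of the paper's terse appeal to ``Fatou's inequality'' after (\ref{ee4}). Where you genuinely diverge is the endgame, and your version is stronger. The paper passes from equality in (\ref{ee17}) to (\ref{ee18}) and immediately concludes $d_T\phi=0$; but the integrand of (\ref{ee18}) involves ${\rm Ric}^Q+\tfrac{4(p-1)}{p^2}\mu_0$, which is only assumed nonnegative globally and positive at $x_0$, so (\ref{ee18}) by itself forces $d_T\phi$ to vanish only on the open set where the Ricci inequality is strict; propagating this to all of $M$ requires an additional argument that the paper leaves implicit. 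Your decomposition into two nonnegative quantities supplies exactly that: the curvature equality gives $|d_T\phi|(x_0)=0$, while the spectral equality makes $w=|d_T\phi|^{p/2}$ an $L^2$ minimizer of the Rayleigh quotient, hence a nonnegative solution of $\Delta_Bw=\mu_0 w$, which (since $\mu_0\ge 0$, so $w$ is a nonnegative supersolution of the drift-Laplacian in the analyst's sign convention) must vanish identically by the strong minimum principle once it vanishes at an interior point. Two routine points you should still make explicit, which the paper also glosses over: the regularity of $|d_T\phi|^{p-1}$ at zeros of $d_T\phi$ (your $\epsilon$-regularization handles this), and the fact that elliptic regularity and the maximum principle apply to $\Delta_B$ because basic functions descend to local quotients of the foliation, where the basic Laplacian is genuinely elliptic---this is one place where the hypothesis that all leaves are compact is used.
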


\begin{proof}
Let $M$ be a complete foliated Riemannian manifold such that ${\rm Ric^{Q}}\geq-C$ for all $x$ and ${\rm Ric^{Q}}>-C$ at some point $x_{0}$, where $C=\frac{4(p-1)}{p^{2}}\mu_{0}$.
Since $K^{Q'}\leq0$ and ${\rm Ric^{Q}}\geq-C$, from (\ref{ee3}), we have
\begin{align}\label{ee13}
\langle F(d_T\phi),d_T\phi\rangle\geq \sum_a g_{Q'}(d_T \phi({\rm Ric^{Q}}(E_a)),d_T \phi(E_a))\geq-C|d_T\phi|^{2}.
\end{align}
Since $\phi$ is a $(\mathcal F,\mathcal F')_{p}$-harmonic map, from Corollary \ref{co3}, we have
\begin{align}\label{ee52}
|d_T&\phi|\Delta_B|d_T\phi|^{p-1}
-\langle\delta_\nabla d_\nabla |d_T\phi|^{p-2}d_T \phi, d_T \phi\rangle \notag\\
&+\langle d_\nabla i(\kappa_{B}^\sharp)d_T\phi,|d_T\phi|^{p-2}d_T \phi\rangle
-|d_T\phi|^{p-1}\kappa_{B}^\sharp(|d_T\phi|)\notag\\
&\leq-|d_T\phi|^{p-2}\sum_a g_{Q'}(d_T \phi({\rm Ric^{Q}}(E_a)),d_T \phi(E_a))\leq C|d_T\phi|^{p}.
\end{align}
Let $B_{l}=\{y\in M|\rho(y)\leq l\}$, where $\rho(y)$ is the distance between leaves through a fixed point $x_{0}$ and $y$.
Let $\omega_{l}$ be the Lipschitz continuous basic function such that
\begin{align*}
\left\{
  \begin{array}{ll}
    0\leq\omega_{l}(y)\leq1 \quad {\rm for \, any} \, y\in M\\
    {\rm supp}\, \omega_{l}\subset B_{2l}\\
    \omega_{l}(y)=1 \quad {\rm for \, any} \,  y\in B_{l}\\
    \lim\limits_{l\rightarrow\infty}\omega_{l}=1\\
    |d\omega_{l}|\leq\frac{\alpha}{l} \quad {\rm almost \, everywhere \, on} M,
  \end{array}
\right.
\end{align*}
where $\alpha$ is positive constant \cite{Y1}. Therefore, $\omega_{l}\phi$ has compact support for any basic form $\phi\in\Omega_{B}^{*}(\mathcal F)$.
Multiplying (\ref{ee52}) by $\omega_{l}^{2}$ and integrating by parts, this yields
\begin{align}\label{ee4}
\int_{M}&\langle \omega_{l}^{2}|d_T\phi|,\Delta_B|d_T\phi|^{p-1}\rangle\mu_{M}
-\int_{M}\langle \omega_{l}^{2}d_T\phi, \delta_\nabla d_\nabla |d_T\phi|^{p-2}d_T\phi\rangle\mu_{M}\notag \\
&+\int_{M}\langle d_\nabla i(\kappa_{B}^\sharp)d_T\phi,\omega_{l}^{2}|d_T\phi|^{p-2}d_T \phi\rangle\mu_{M}
-\int_{M}\langle \omega_{l}^{2}|d_T\phi|^{p-1},\kappa_{B}^\sharp(|d_T\phi|)\rangle\mu_{M}\notag \\
&\leq -\sum_a \int_{M}\omega_{l}^{2} |d_T\phi|^{p-2}g_{Q'}(d_T \phi({\rm Ric^{Q}}(E_a)),d_T \phi(E_a))\mu_{M} \notag \\
&\leq C\int_{M}\omega_{l}^{2}|d_T\phi|^{p}\mu_{M}.
\end{align}
By Lemma \ref{lem1}, since
\begin{align*}
\delta_\nabla (\omega_{l}^{2}|d_T\phi|^{p-2}d_T \phi)=-i(d_{B}\omega_{l}^{2})|d_T\phi|^{p-2}d_T \phi=-2\omega_{l}i(d_{B}\omega_{l})|d_T\phi|^{p-2}d_T \phi,
\end{align*}
we have
\begin{align*}
\bigg{|}\int_{M}\langle d_\nabla i(\kappa_{B}^\sharp)d_T\phi,\omega_{l}^{2}|d_T\phi|^{p-2}d_T \phi\rangle\mu_{M}\bigg{|}
=&\bigg{|}\int_{M}\langle i(\kappa_{B}^\sharp)d_T\phi, -2\omega_{l}i(d_{B}\omega_{l})|d_T\phi|^{p-2}d_T \phi\rangle\mu_{M}\bigg{|}\notag \\
\leq&2\int_{M}\omega_{l}|i(\kappa_{B}^\sharp)d_T\phi||i(d_{B}\omega_{l})|d_T\phi|^{p-2}d_T \phi|\mu_{M}\notag \\
\leq&2\int_{M}\omega_{l}|\kappa_{B}||d_{B}\omega_{l}||d_T\phi|^{p}\mu_{M}\notag \\
\leq&2\frac{\alpha}{l}\max\{|\kappa_{B}|\}\int_{M}\omega_{l}|d_T\phi|^{p}\mu_{M}.
\end{align*}
For the second inequality in the above, we use the fact
\begin{align}\label{ee82}
|X^{\flat}\wedge d_T\phi|^{2}+|i(X)d_T\phi|^{2}=|X|^{2}|d_T\phi|^{2}
\end{align}
for any vector $X$. If we let $l\rightarrow\infty$, then
\begin{align}\label{ee57}
\lim\limits_{l\rightarrow\infty}\int_{M}\langle d_\nabla i(\kappa_{B}^\sharp)d_T\phi,\omega_{l}^{2}|d_T\phi|^{p-2}d_T \phi\rangle\mu_{M}=0.
\end{align}
At the same time, from $\delta_{B}\kappa_{B}=0$ and the Cauchy-Schwarz inequality, we get
\begin{align*}
\bigg{|}\int_{M}\langle \omega_{l}^{2}|d_T\phi|^{p-1},\kappa_{B}^{\sharp}(|d_T\phi|)\rangle\mu_{M}\bigg{|}
=&\frac{1}{p}\bigg{|}\int_{M}\kappa_{B}^{\sharp}(\omega_{l}^{2}|d_T\phi|^{p})\mu_{M}-\int_{M}2\omega_{l}|d_T\phi|^{p}\langle \kappa_{B},d_{B}\omega_{l}\rangle\mu_{M} \bigg{|} \notag \\
=&\frac{2}{p}\bigg{|}\int_{M}\omega_{l}|d_T\phi|^{p}\langle \kappa_{B},d_{B}\omega_{l}\rangle\mu_{M}\bigg{|}\notag \\
\leq&\frac{\alpha}{l}\max\{|\kappa_{B}|\}\int_{M}\omega_{l}|d_T\phi|^{p}\mu_{M}.
\end{align*}
So by letting $l\rightarrow\infty$,
\begin{align}\label{ee35}
\lim\limits_{l\rightarrow\infty}\int_{M}\langle \omega_{l}^{2}|d_T\phi|^{p-1},\kappa_{B}(|d_T\phi|)\rangle\mu_{M}=0.
\end{align}
By the Cauchy-Schwarz inequality, we know that
\begin{align}\label{ee16}
\int_{M}\langle& \omega_{l}^{2}|d_T\phi|,\Delta_B|d_T\phi|^{p-1}\rangle\mu_{M}\notag\\
&=\int_{M}\langle d_{B}(\omega_{l}^{2}|d_T\phi|),d_{B}|d_T\phi|^{p-1}\rangle\mu_{M} \notag\\
&=\frac{A_{1}}{p}\int_{M}\omega_{l}^{2}|d_{B}|d_T\phi|^{\frac{p}{2}}|^{2}\mu_{M}+A_{1}\int_{M}\langle |d_T\phi|^{\frac{p}{2}}d_{B}\omega_{l},\omega_{l}d_{B}|d_T\phi|^{\frac{p}{2}}\rangle\mu_{M} \notag\\
&\geq\frac{A_{1}}{p}\int_{M}\omega_{l}^{2}|d_{B}|d_T\phi|^{\frac{p}{2}}|^{2}\mu_{M}-A_{1}\int_{M}\omega_{l}|d_T\phi|^{\frac{p}{2}}|d_{B}\omega_{l}||d_{B}|d_T\phi|^{\frac{p}{2}}|\mu_{M},
\end{align}
where $A_{1}=\frac{4(p-1)}{p}$.
It is well known \cite{prs} that for a basic function $f$ on $M$, we get from (\ref{ee82})
\begin{align}\label{ee5}
|d_{\nabla}(fd_{T}\phi)|=|d_{B}f\wedge d_{T}\phi|\leq |d_{B}f||d_{T}\phi|.
\end{align}
Hence we have
\begin{align}\label{ee7}
\bigg{|}\int_{M}\langle \omega_{l}^{2}d_T\phi, \delta_\nabla d_\nabla |d_T\phi|^{p-2}d_T\phi\rangle\mu_{M}\bigg{|}
&=\bigg{|}\int_{M}\langle d_\nabla(\omega_{l}^{2}d_T\phi), d_\nabla |d_T\phi|^{p-2}d_T\phi\rangle\mu_{M}\bigg{|} \notag \\
&\leq \int_{M}|d_\nabla(\omega_{l}^{2}d_T\phi)||d_\nabla |d_T\phi|^{p-2}d_T\phi|\mu_{M} \notag \\
&\leq 2\int_{M}|\omega_{l}d_{B}\omega_{l}||d_{B}|d_T\phi|^{p-2}||d_T\phi|^{2}  \notag \\
&\leq A_{2}\int_{M} \omega_{l}|d_{B}\omega_{l}||d_T\phi|^{\frac{p}{2}}|d_{B}|d_T\phi|^{\frac{p}{2}}|,
\end{align}
where $A_{2}=\frac{4(p-2)}{p}$.
From (\ref{ee16}) and (\ref{ee7}), we get
\begin{align}\label{ee10}
\int_{M}&\langle \omega_{l}^{2}|d_T\phi|,\Delta_B|d_T\phi|^{p-1}\rangle\mu_{M}
-\int_{M}\langle \omega_{l}^{2}d_T\phi, \delta_\nabla d_\nabla |d_T\phi|^{p-2}d_T\phi\rangle\mu_{M} \notag \\
&\geq -(A_{1}+A_{2})\int_{M} \omega_{l}|d_{B}\omega_{l}||d_T\phi|^{\frac{p}{2}}|d_{B}|d_T\phi|^{\frac{p}{2}}|\mu_{M}
+\frac{A_{1}}{p}\int_{M}\omega_{l}^{2}|d_{B}|d_T\phi|^{\frac{p}{2}}|^{2}\mu_{M}.
\end{align}
From (\ref{ee4}) and Fatou's inequality, it is trivial that $d_{B}|d_T\phi|^{\frac{p}{2}}\in L^{2}$.
Hence by the H${\rm \ddot{o}}$lder inequality,
$$\int_{M}\omega_{l}|d_{B}\omega_{l}||d_T\phi|^{\frac{p}{2}}|d_{B}|d_T\phi|^{\frac{p}{2}}|\mu_{M}\leq
(\int_{M}|d_T\phi|^{p}|d_{B}\omega_{l}|^{2}\mu_{M})^{\frac{1}{2}}(\int_{M}\omega_{l}^{2}|d_{B}|d_T\phi|^{\frac{p}{2}}|^{2}\mu_{M})^{\frac{1}{2}}.$$
If we let $l\rightarrow\infty$, then
\begin{align}\label{ee32}
\lim\limits_{l\rightarrow\infty}\int_{M}\omega_{l}|d_T\phi|^{\frac{p}{2}}|d_{B}\omega_{l}||d_{B}|d_T\phi|^{\frac{p}{2}}|\mu_{M}=0.
\end{align}
From (\ref{ee16}) and (\ref{ee32}), we have
\begin{align}\label{ee83}
\lim\limits_{l\rightarrow\infty}\int_{M}\langle& \omega_{l}^{2}|d_T\phi|,\Delta_B|d_T\phi|^{p-1}\rangle\mu_{M}
\geq\frac{A_{1}}{p}\int_{M}|d_{B}|d_T\phi|^{\frac{p}{2}}|^{2}\mu_{M}.
\end{align}

On the other hand, by the Rayleigh quotient theorem, we have
\begin{align}\label{ee22}
\frac{\int_{M}\langle d_{B}|d_T\phi|^{\frac{p}{2}}, d_{B}|d_T\phi|^{\frac{p}{2}}\rangle\mu_{M}}{\int_{M} |d_T\phi|^{p}\mu_{M}}\geq\mu_{0}.
\end{align}
From (\ref{ee4}), (\ref{ee57}), (\ref{ee35}), (\ref{ee32}), (\ref{ee83}) and (\ref{ee22}),
by $l\rightarrow\infty$, we get
\begin{align}\label{ee17}
\frac{A_{1}}{p}\mu_{0}\int_{M}|d_T\phi|^{p}\mu_{M}
&\leq\frac{A_{1}}{p}\int_{M}|d_{B}|d_T\phi|^{\frac{p}{2}}|^{2}\mu_{M} \notag\\
&\leq-\sum_a \int_{M} |d_T\phi|^{p-2}g_{Q'}(d_T \phi({\rm Ric^{Q}}(E_a)),d_T \phi(E_a))\mu_{M} \notag\\
&\leq C\int_{M}|d_T\phi|^{p}\mu_{M}.
\end{align}
Since $C=\frac{A_{1}}{p}\mu_{0}$, the equation (\ref{ee17}) implies that
\begin{align}\label{ee18}
\sum_a \int_{M}|d_T\phi|^{p-2}g_{Q'}(d_T \phi({\rm (Ric^{Q}}+C)(E_a)),d_T \phi(E_a))\mu_{M}=0.
\end{align}
Since ${\rm Ric^{Q}}>-C$ at some point $x_{0}$, then $d_T \phi=0$ by (\ref{ee18}).
It means that $\phi$ is transversally constant.
\end{proof}

\begin{rem}
Theorem \ref{th5} can be found for the point foliation in \cite{mlj},
\end{rem}

\begin{cor}\label{co5}
Let $(M,g,\mathcal F)$ be a complete foliated Riemannian manifold with coclosed mean curvature form $\kappa_{B}$ and all leaves be compact.
Let $(M',g',\mathcal F')$ be a foliated Riemannian manifold with non-positive transversal sectional curvature $K^{Q'}$. Assume that the transversal Ricci curvature ${\rm Ric^{Q}}$ of $M$ satisfies ${\rm Ric^{Q}}\geq-\frac{4(p-1)}{p^{2}}\mu_{0}$ for all $x\in M$ and ${\rm Ric^{Q}}>-\frac{4(p-1)}{p^{2}}\mu_{0}$ at some point $x_{0}$.
Then any $(\mathcal F,\mathcal F')_{q}$-harmonic map $\phi : (M,g,\mathcal F) \rightarrow (M', g',\mathcal F')$ with $2\leq q\leq p$ of $E_{B,q}(\phi)<\infty$ is transversally  constant.
\end{cor}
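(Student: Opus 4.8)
The plan is to reduce the statement for an arbitrary $q$ with $2\le q\le p$ to the already-proven case $q=p$ of Theorem \ref{th5}. The crucial observation is that the curvature hypothesis is stated with the \emph{same} constant $\tfrac{4(p-1)}{p^2}\mu_0$ regardless of $q$, so I must check that the Weitzenb\"ock argument of Theorem \ref{th5}, when run with exponent $q$ in place of $p$, produces a coefficient that the stated bound still dominates. Concretely, repeating the derivation of \eqref{ee17} verbatim but with $q$ throughout yields, after cutting off by $\omega_l^2$ and letting $l\to\infty$,
\begin{align}\label{eq:corstep}
\frac{4(q-1)}{q^{2}}\,\mu_{0}\int_{M}|d_T\phi|^{q}\mu_{M}
\le -\sum_a \int_{M} |d_T\phi|^{q-2}\,g_{Q'}\big(d_T \phi({\rm Ric^{Q}}(E_a)),d_T \phi(E_a)\big)\mu_{M}.
\end{align}
Here I use that $E_{B,q}(\phi)<\infty$ to justify the cut-off convergence exactly as in Theorem \ref{th5}, that $\kappa_B$ is coclosed so the mean-curvature terms \eqref{ee57} and \eqref{ee35} vanish in the limit, and that $K^{Q'}\le 0$ so the sectional-curvature part of $\langle F(d_T\phi),d_T\phi\rangle$ is nonnegative and can be dropped.

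The second and decisive step is the elementary monotonicity of the scalar function $h(q)=\tfrac{4(q-1)}{q^{2}}$ on the interval $[2,p]$. Differentiating gives $h'(q)=\tfrac{4(2-q)}{q^{3}}\le 0$ for $q\ge 2$, so $h$ is nonincreasing, and therefore $\tfrac{4(q-1)}{q^2}\ge \tfrac{4(p-1)}{p^2}$ whenever $2\le q\le p$. Combining this with the hypothesis ${\rm Ric^{Q}}\ge -\tfrac{4(p-1)}{p^2}\mu_0$ shows that the left-hand coefficient in \eqref{eq:corstep} is at least as large as the constant $C=\tfrac{4(p-1)}{p^2}\mu_0$ controlling ${\rm Ric^Q}$ from below, so the chain of inequalities closes just as in \eqref{ee17}, forcing
\begin{align*}
\sum_a \int_{M}|d_T\phi|^{q-2}\,g_{Q'}\big(d_T \phi(({\rm Ric^{Q}}+C)(E_a)),d_T \phi(E_a)\big)\mu_{M}=0.
\end{align*}

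Finally, since ${\rm Ric^Q}+C\ge 0$ everywhere and ${\rm Ric^Q}+C>0$ at the point $x_0$, the nonnegative integrand must vanish identically, and positivity at $x_0$ forces $d_T\phi=0$; hence $\phi$ is transversally constant. I expect the main obstacle to be purely bookkeeping rather than conceptual: one must verify that every limiting step in the proof of Theorem \ref{th5}---the convergence \eqref{ee32} coming from the H\"older estimate, the vanishing of the $\kappa_B$-terms, and the application of the Rayleigh quotient bound \eqref{ee22}---remains valid with the finite-energy hypothesis $E_{B,q}(\phi)<\infty$ for the smaller exponent $q$, rather than $E_{B,p}(\phi)<\infty$. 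Since all those estimates used only $|d_T\phi|^{q}\in L^1$ together with the cut-off functions $\omega_l$, and never the specific value $q=p$, the argument transfers without modification, and the only genuinely new ingredient is the scalar inequality $h(q)\ge h(p)$ established above.
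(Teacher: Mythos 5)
Your proposal is correct and is essentially the paper's own argument: the paper's entire proof consists of the monotonicity inequality $\frac{4(q-1)}{q^{2}}\geq\frac{4(p-1)}{p^{2}}$ for $2\leq q\leq p$, after which it simply invokes Theorem \ref{th5} with exponent $q$ (whose hypotheses then hold, since $\mu_{0}\geq 0$ gives ${\rm Ric^{Q}}\geq-\frac{4(p-1)}{p^{2}}\mu_{0}\geq-\frac{4(q-1)}{q^{2}}\mu_{0}$). You prove the same scalar inequality and merely unpack the proof of Theorem \ref{th5} with $q$ in place of $p$ rather than citing it as a black box, which changes nothing of substance.
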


\begin{proof}
For $2\leq q\leq p$, we have $\frac{4(q-1)}{q^{2}}\geq\frac{4(p-1)}{p^{2}}$. So the proof is trivial.
\end{proof}

The following corollary can be obtained easily when $p=2$. 
\begin{cor}\label{co6}
Let $(M,g,\mathcal F)$ be a complete foliated Riemannian manifold with coclosed mean curvature form $\kappa_{B}$ and all leaves be compact.
Let $(M',g',\mathcal F')$ be a foliated Riemannian manifold with non-positive transversal sectional curvature $K^{Q'}$. Assume that the transversal Ricci curvature ${\rm Ric^{Q}}$ of $M$ satisfies ${\rm Ric^{Q}}\geq-\mu_{0}$ for all $x\in M$ and ${\rm Ric^{Q}}>-\mu_{0}$ at some point $x_{0}$.
Then any $(\mathcal F,\mathcal F')$-harmonic map $\phi : (M,g,\mathcal F) \rightarrow (M', g',\mathcal F')$ of $E_{B}(\phi)<\infty$ is transversally  constant.
\end{cor}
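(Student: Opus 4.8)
The plan is to read this statement as precisely the $p=2$ specialization of Theorem \ref{th5}, so that the entire proof reduces to checking that the hypotheses and the conclusion match under $p=2$; no new analytic work is needed. Three identifications do all the work. By the definition in Section 3, an $(\mathcal F,\mathcal F')$-harmonic map is an $(\mathcal F,\mathcal F')_2$-harmonic map, and the transversal energies agree, $E_B(\phi)=E_{B,2}(\phi)$, so the finiteness hypothesis $E_B(\phi)<\infty$ is exactly $E_{B,2}(\phi)<\infty$. The curvature coefficient specializes as $\frac{4(p-1)}{p^2}\big|_{p=2}=1$, so ${\rm Ric}^Q\geq-\mu_0$ and ${\rm Ric}^Q>-\mu_0$ at $x_0$ are literally the bounds required by Theorem \ref{th5} at $p=2$. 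With these matches, Theorem \ref{th5} applies verbatim and returns that $\phi$ is transversally constant.

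Should one wish to exhibit the mechanism at $p=2$ rather than merely invoke Theorem \ref{th5}, I would retrace its proof, noting that it simplifies. At $p=2$ the $p$-tension field is the ordinary transversal tension field, $|d_T\phi|^{p-2}d_T\phi=d_T\phi$, and since $d_\nabla d_T\phi=0$ the term $\delta_\nabla d_\nabla|d_T\phi|^{p-2}d_T\phi$ vanishes identically; equivalently the constant $A_2=\frac{4(p-2)}{p}$ is zero, so the cross estimate (\ref{ee7}) drops out and the coupling in (\ref{ee10}) disappears. The remaining constants are $A_1=2$ and $C=\mu_0$. Starting from Corollary \ref{co3} with the sign conditions $K^{Q'}\leq0$ and ${\rm Ric}^Q\geq-\mu_0$, multiplying by the cutoff $\omega_l^2$ and integrating as in Theorem \ref{th5}, the mean-curvature contributions again tend to zero as $l\to\infty$ using $\delta_B\kappa_B=0$ and $|d\omega_l|\leq\alpha/l$.

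Passing to the limit leaves the squeeze
\begin{align*}
\mu_0\int_M|d_T\phi|^2\mu_M
&\leq\int_M|d_B|d_T\phi||^2\mu_M \\
&\leq-\sum_a\int_M g_{Q'}\big(d_T\phi({\rm Ric}^Q(E_a)),d_T\phi(E_a)\big)\mu_M \\
&\leq\mu_0\int_M|d_T\phi|^2\mu_M,
\end{align*}
where the first inequality is the Rayleigh quotient bound (\ref{ee22}) at $p=2$ and the last uses ${\rm Ric}^Q\geq-\mu_0$. Equality throughout forces $\sum_a g_{Q'}\big(d_T\phi(({\rm Ric}^Q+\mu_0)(E_a)),d_T\phi(E_a)\big)=0$ pointwise, and since ${\rm Ric}^Q>-\mu_0$ at $x_0$ this yields $d_T\phi=0$, i.e., $\phi$ is transversally constant.

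There is no real obstacle here: the corollary is a pure specialization, and the only substantive checks are the arithmetic identity $\frac{4(p-1)}{p^2}=1$ at $p=2$ together with the definitional matches $E_B=E_{B,2}$ and $(\mathcal F,\mathcal F')$-harmonic $=(\mathcal F,\mathcal F')_2$-harmonic. The lone structural feature worth flagging is that the vanishing of the $\delta_\nabla d_\nabla$ term (equivalently $A_2=0$) removes the most delicate cross term of the general argument, so the $p=2$ case is strictly simpler than Theorem \ref{th5} rather than a genuinely independent computation.
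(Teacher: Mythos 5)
Your proposal is correct and matches the paper's own treatment: the paper gives no separate argument for this corollary, simply noting it "can be obtained easily when $p=2$" as the specialization of Theorem \ref{th5}, which is exactly your reduction via $E_B=E_{B,2}$, the identification of $(\mathcal F,\mathcal F')$-harmonic with $(\mathcal F,\mathcal F')_2$-harmonic, and $\tfrac{4(p-1)}{p^2}\big|_{p=2}=1$. Your optional retracing of the proof at $p=2$ (with $A_2=0$ killing the $\delta_\nabla d_\nabla$ cross term) is a faithful and accurate simplification of the paper's argument, not a different route.
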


\begin{rem}  Corollary \ref{co6} for the transversal harmonic map has been studied by Fu and Jung \cite{FJ}. And if $\mathcal F$ is minimal, then  Liouville type theorem (Theorem \ref{th5}) holds for the transversal $p$-harmonic map.  But we do not know whether Theorem \ref{th5} holds for the transversal $p$-harmonic map $(p>2)$ on arbitrary foliated Riemannian manifolds. 
\end{rem}

\end{document}